\documentclass[10pt]{article}

\usepackage{mathtext}
\usepackage[utf8]{inputenc}
\usepackage[english]{babel}

\usepackage{amsfonts, amsmath, amssymb, amsthm, amscd}
\usepackage{mathtools}
\usepackage{xcolor}
\usepackage{ulem}

\usepackage{enumerate}

\usepackage{geometry}
\usepackage{graphicx}
\usepackage{hyperref}
\usepackage{cancel}
\hypersetup{pdfstartview=FitH,  linkcolor=blue,urlcolor=urlcolor, citecolor=blue, colorlinks=true}

\DeclarePairedDelimiter\abs{\lvert}{\rvert}
\newcommand{\norm}[1]{\left\lVert#1\right\rVert}

\DeclareMathOperator{\supp}{supp}
\DeclareMathOperator{\real}{Re}
\DeclareMathOperator{\im}{Im}
\DeclareMathOperator{\Ran}{Ran}

\theoremstyle{theorem}
\newtheorem{theorem}{Theorem}[section]
\theoremstyle{theorem}
\newtheorem{remark}{Remark}[section]
\theoremstyle{theorem}
\newtheorem{proposition}{Proposition}[section]
\theoremstyle{lemma}
\newtheorem{lemma}{Lemma}[section]
\theoremstyle{definition}
\newtheorem{definition}{Definition}[section]

\theoremstyle{corollary}
\newtheorem{corollary}{Corollary}[section]
\newcommand{\Label}[1]{\label{#1}}

\usepackage{fancyhdr}
\newcommand{\CCx}{\mathbb{C^\times}}
\newcommand{\rank}{\mathbb{\text{{\rm rank}\,}}}
\newcommand{\MMpx}{\mathfrak{M}^{-1}_{\pi\to x}}
\newcommand{\MMxp}{\mathfrak{M}_{x\to\pi}}

\newcommand{\be}{\beta}

\numberwithin{equation}{section}

\title{A Pseudo-Differential Operator of the $p$-Adic Multiplicative Calculus}

\author{\textbf{Alexandra V. Antoniouk}\\
\footnotesize Institute of Mathematics,\\
\footnotesize National Academy of Sciences of Ukraine,\\
\footnotesize Tereshchenkivska 3, Kyiv, 01024 Ukraine,\\
\footnotesize American University Kyiv, Poshtova Sq 3, 04070, Kyiv, Ukraine
\footnotesize E-mail: antoniouk.a@gmail.com
\and
\textbf{Anatoly N. Kochubei}\\
\footnotesize Institute of Mathematics,\\
\footnotesize National Academy of Sciences of Ukraine,\\
\footnotesize Tereshchenkivska 3, Kyiv, 01024 Ukraine,\\
\footnotesize E-mail: kochubei@imath.kiev.ua
\and
\textbf{Mariia V. Serdiuk}\\
\footnotesize Taras Shevchenko National University of Kyiv,\\
\footnotesize  Akademika Hlushkova Ave 4-e, Kyiv, 03127 Ukraine,\\
\footnotesize E-mail: mariia.v.serdiuk@gmail.com }

\begin{document}

\maketitle

\large
\begin{abstract}
We introduce and study a pseudo-differential operator $W^\alpha$, defined via the p-adic Mellin transformation, acting on real- or complex-valued functions on the multiplicative group of the field of $p$-adic numbers. The operator $W^\alpha$ is maximally dissipative. An estimate of its heat kernel is given.
\end{abstract}

\section{Introduction}

By the classical Ostrowski theorem, the field $\Bbb Q_p$ of $p$-adic numbers (here $p$ is a prime number) is the only alternative to $\mathbb R$ in terms of possible absolute values on the field $\mathbb Q$ of rational numbers. This fact,
together with various applications of $\mathbb Q_p$, makes the study of $p$-adic functions an important branch of mathematical analysis.

The $p$-adic analysis is divided essentially into the study of functions $\mathbb Q_p\to \mathbb Q_p$ (polynomials, analytic functions etc) and that of functions  on $\mathbb Q_p$ and various $p$-adic spaces, with values from $\mathbb R$
and $\mathbb C$ (group characters, integral transforms, wave functions in $p$-adic models of mathematical physics etc). Such a function cannot be differentiated. However, for such functions the Fourier analysis is available, making it possible
to define pseudo-differential operators  ($\Psi$DOs).

Below we consider various objects of this, second kind of $p$-adic analysis. The model object here is the $\Psi$DO $D^\alpha$ called the Vladimirov-Taibleson operator. It can be defined the Fourier analysis as a $\Psi$DO with the symbol $\xi \to |\xi|^\alpha$. An alternative is an explicit representation as a hypersingular integral operator of fractional differentiation. We would like to stress that the investigation of $D^\alpha$, now well-developed, has lead to the study, often highly nontrivial, of various classes of functions, now seen as important branches of $p$-adic analysis.

In particular, we have to mention various analogs of the Sobolev spaces \cite{T1975,Z,Z2025}, orthonormal systems constructed by Vladimirov et al \cite{VVZ,K2001}, $p$-adic wavelets \cite{AKS,KKZ}, $p$-adic radial calculus and integral equations \cite{K2014,AKS1}, constructions of $p$-adic geometry \cite{Be}. Note also the theory of $p$-adic parabolic equations and Markov processes \cite{K2001,Z,Z2025}, $p$-adic wave equation \cite{K2008}, various types of nonlinear equations involving $D^\alpha$ \cite{KK,AKN}.

In this paper, we initiate a study of a $\Psi$DO $W^\alpha$ on $\mathbb Q_p$ defined on functions with a singularity and simultaneously closely related to the algebraic structures of $\mathbb Q_p$. This is a $\Psi$DO on $\mathbb Q_p\setminus \{ 0\}$, the multiplicative group of $\mathbb Q_p$. The definition is based on the $p$-adic Mellin transform (see \cite{GG,T1975}). The operator $W^\alpha$ is maximally dissipative. We consider the corresponding analog of the heat equation and give an estimate of the heat kernel. This estimate looks quite different from its counterparts, both in real and $p$-adic analysis.

\section{Preliminaries}

Let $\mathbb{Q}_p$ be the field of $p$--adic numbers, $\mathbb{Q}_p^{\times}=\mathbb{Q}_p\setminus\{0\}$ its multiplicative group. On $\mathbb{Q}_p^{\times}$ the Haar measure $dx^{\times}$ is defined as $\dfrac{dx}{|x|_p}$, where $dx$ is the Haar measure on the additive group $\mathbb{Q}_p$.

We denote by $B_n=\{x\in\mathbb{Q}_p:|x|_p\leq p^n\}$ a ball with the center at the point $0$ of radius $p^n$, $S_n=\{x\in\mathbb{Q}_p:|x|_p=p^n\}$ a sphere with the center at $0$ of radius $p^n$. Respectively, $S_0 =\{x\in\mathbb{Q}_p:|x|_p=1\}$ is a multiplicative group of units.

Consider the set $A=\{x:|1-x|_p<1\}$. Note that the set $A$ is a compact subgroup of $\mathbb{Q}_p^{\times}$, and every element $x\in\mathbb{Q}_p^{\times}$ can be uniquely decomposed into a product $x=p^n\varepsilon^{\ell}a$, where $|x|_p=p^{-n}$, $0\leq\ell\leq p-2$ and $a\in A$. Therefore, $\mathbb{Q}_p^{\times}$ is isomophic to the direct product of three groups:
\begin{equation}\Label{Qpx}\
\mathbb{Q}_p^{\times}\cong\mathbb{Z}\times\mathbb{Z}_{p-1}\times A,
\end{equation}
where $\mathbb{Z}$ denotes the infinite cyclic group of the elements $\{p^n\}_{n=-\infty}^{n=\infty}$, $\mathbb{Z}_{p-1}$ denotes the finite cyclic group of the elements $\{\varepsilon^{\ell},0\leq\ell\leq p-2\}$.
Or equivalently, $\mathbb{Q}_p^{\times}$ can also be viewed as
\begin{equation}\Label{Qpx1}\
\mathbb{Q}_p^{\times}\cong p^\mathbb{Z}\times S_0.
\end{equation}

Let $A_0=S_0$, $A_n=1+B_{-n}=B_{-n}(1)=1+p^n\mathbb{Z}_p$, $n\geq1$. The family $\{A_n\}_{n=0}^{\infty}$ is a fundamental system of neighbourhoods of the unit in $\mathbb{Q}_p^{\times}$.

\begin{definition}
We call \textit{a multiplicative character} over the field $\mathbb{Q}_p$ a character of the multiplicative group $\mathbb{Q}_p^{\times}$, i.e. a continuous group homomorphism $\pi:\mathbb{Q}_p^{\times}\to\CCx$:
\begin{equation}\Label{pi}\
\pi(xy)=\pi(x)\pi(y)\quad\mbox{ for all }x,y\in\mathbb{Q}_p^{\times}.
\end{equation}
Respectively, we call \textit{a unitary multiplicative character} over the field $\mathbb{Q}_p$ such multiplicative character $\pi$ that
$$|\pi(x)|=1\quad\mbox{ for all }\quad x\in\mathbb{Q}_p^{\times}.$$
\end{definition}

The set of all unitary multiplicative characters on $\mathbb{Q}_p^{\times}$ forms a dual group $\widehat{\mathbb{Q}_p^{\times}}$ with the operation of pointwise multiplication $(\pi_1\pi_2)(x)=\pi_1(x)\pi_2(x)$, $x\in\mathbb{Q}_p^{\times}$.

\begin{definition}
A multiplicative character $\pi\in\widehat{\mathbb{Q}_p^{\times}}$ is called \textit{unramified} if it is trivial on $S_0$, i.e.
$$\pi(u)=1\mbox{ for all }u\in S_0.$$

If the character is not trivial on $S_0$, it is called \textit{ramified}.
\end{definition}

For each character $\pi\in\widehat{\mathbb{Q}_p^{\times}}$ there is an integer $k\geq0$ such that $\pi$ is trivial on $A_k$ (see \cite{T1975}). If $\pi$ is unmarified, we say that it has \textit{a rank} $0$. If for some $k\geq1$ $\pi$ is trivial on $A_k$, but not on $A_{k-1}$, we say that $\pi$ has \textit{rank} $k$. We will denote the character's rank as $\rank\pi$.

It follows from the decomposition \eqref{Qpx} that the dual group $\widehat{\mathbb{Q}_p^{\times}}$ is a direct product of three groups: the group of characters that depend only on the norm of the argument, that is $\pi(x)=|x|_p^{i\beta}$, $x\in\mathbb{Q}_p^{\times}$, $-\frac{\pi}{\ln p}<\beta\leq \frac{\pi}{\ln p}$, the cyclic group of the order $p-1$ (characters which depend on $\varepsilon^{\ell}$), and the infinite discrete group $\widehat{A}$, which is dual to $A$. There exists only a finite number of characters on $A$ of each rank, so the group $\widehat{A}$ is countable. Any character $\pi\in\widehat{\mathbb{Q}_p^{\times}}$ can be decomposed as $\pi=\theta|\cdot|_p^{i\beta}$, where $\beta$ is a real parameter, $\theta$ is a multiplicative character on the unit spehere $S_0=\mathbb{Z}_{p-1}\times A$.

We will denote the dual group of all characters $\theta$ on the multiplicative group $S_0$ as $\widehat{S_0}$.

We will use the following integration formulas (see \cite{Vlad}). Let $dx$ be the additive Haar measure on $\mathbb{Q}_p$, $\gamma\in\mathbb{Z}$. Then
\begin{align}\Label{Vlad-11.3}
\int\limits_{S_\gamma}dx&=(1-p^{-1})p^{\gamma};\\
\Label{Vlad-11.19}
\int\limits_{S_0}|x-1|_p^{\alpha-1}dx&=\frac{p-2+p^{-\alpha}}{p(1-p^{-\alpha})},\;\real\alpha>0.
\end{align}
If $\pi\in\widehat{\mathbb{Q}_p^{\times}}$, $\rank\pi=k\geq1$, then
\begin{align}\Label{Vlad-11.40}
\int\limits_{S_{\gamma}}\pi(x)dx&=0;\\
\Label{Vlad-11.44}
\int\limits_{S_0}|x-1|_p^{\alpha-1}\pi(x)dx&=\Gamma_p(\alpha)p^{-\alpha k},\;\real\alpha>0,
\end{align}
where $\Gamma_p(\alpha)=\dfrac{1-p^{\alpha-1}}{1-p^{-\alpha}}$, $\alpha\in\mathbb{C}$, $\alpha\neq\dfrac{2k\pi i}{\ln p}$, $k\in\mathbb{Z}$, is the $p$--adic gamma function.

\begin{definition}
\textit{The set of test functions} $S^{\times}$ on $\mathbb{Q}_p^{\times}$ is the set of functions $\varphi:\mathbb{Q}_p^{\times}\to\mathbb{C}$ such that
\begin{enumerate}
\item $\varphi$ has a compact support, i.e. $\varphi(x)=0$ if $|x|_p<p^{-n}$ or $|x|_p>p^n$ for some integer $n$;
\item $\varphi$ is constant on cosets of $A_m$ for some $m\geq1$, i.e. $\varphi(xa)=\varphi(x)$ for all $x\in\mathbb{Q}_p^{\times}$ when $|1-a|_p\leq p^{-m}$.
\end{enumerate}
\end{definition}

The set $S^{\times}$ consists of the functions $\varphi$ such that $\varphi\in S$ and $\varphi(\cdot^{-1})\in S$, where $S=\mathcal{D}(\mathbb{Q}_p)$ is the Bruhat--Schwartz space of set of test functions on the additive group $\mathbb{Q}_p$ (see \cite{GG}).

We will further denote by $L^{\rho}(\mathbb{Q}_p^{\times})$, $L^{\rho}(S_0)$, $\rho\geq1$, the spaces $L^{\rho}$ with respect to the Haar measure $dx^{\times}=\frac{dx}{|x|_p}$.

For $x\in\mathbb{Q}_p^{\times}$ let $x=p^{-n}y$, where $y\in S_0$, $n\in\mathbb{Z}$.

For $\pi\in\widehat{\mathbb{Q}_p^{\times}}$, let $\pi=\theta|\cdot|_p^{i\beta}$, where $\theta\in\widehat{S_0}$, $-\frac{\pi}{\ln p}<\beta\leq\frac{\pi}{\ln p}$.

Then for $f\in L^1(\mathbb{Q}_p,dx^{\times})$,
\begin{equation}\Label{T-4.1}
\int\limits_{\mathbb{Q}_p^{\times}}f(x)dx^{\times}=\int\limits_{\mathbb{Q}_p^{\times}}f(x)\frac{dx}{|x|_p}=\sum\limits_{n=-\infty}^{+\infty}\int\limits_{S_n}f(p^{-n}y)dy.
\end{equation}
For $g\in L^1(\widehat{\mathbb{Q}_p^{\times}},d\pi)$,
\begin{equation}\Label{T-4.2}
\int\limits_{\widehat{\mathbb{Q}_p^{\times}}}g(\pi)d\pi=\sum\limits_{\theta\in \widehat{S_0}}\frac{1}{a}\int\limits_{-\pi/\ln p}^{\pi/\ln p}g(\theta|\cdot|_p^{i\beta})d\beta,
\end{equation}
where $a$ is a positive constant (\cite{T1975}).

Let $f,g\in L^1(\mathbb{Q}_p^{\times})$. The multiplicative convolution of the functions $f$ and $g$ is defined as
\begin{equation}\Label{conv}
(f\ast g)(x)=\int\limits_{\mathbb{Q}_p^{\times}}f(y)g(xy^{-1})\frac{dy}{|y|_p}.
\end{equation}
If $1\leq\rho\leq\infty$, $f\in L^1(\mathbb{Q}_p^{\times})$, $g\in L^{\rho}(\mathbb{Q}_p^{\times})$, then $f\ast g\in L^{\rho}(\mathbb{Q}_p^{\times})$ (see \cite[Pr. 2.40]{Fo}).

For $f\in L^1(\mathbb{Q}_p^{\times})$ \textit{the Mellin transform} of the function $f$ is defined as (see \cite{T1975})
\begin{equation}\Label{2.1}
\left(\mathfrak{M}_{x\to\pi}f\right)(\pi)=\int\limits_{\mathbb{Q}_p^{\times}}f(x)\pi(x)\frac{dx}{|x|_p},\;\pi\in\widehat{\mathbb{Q}_p^{\times}}.
\end{equation}

It follows from \eqref{T-4.1} that
\begin{equation}\Label{2.1}
\left(\mathfrak{M}_{x\to\pi}f\right)(\pi)=\sum\limits_{n=-\infty}^{+\infty}p^{i\beta n}\int\limits_{S_n}f(p^{n}y)\theta(y)dy.
\end{equation}

For $g\in L^1(\widehat{\mathbb{Q}_p^{\times}})$, \textit{the inverse Mellin transform} of the function $g$ is defined as
\begin{equation}\Label{2.2}\
\left(\mathfrak{M}^{-1}_{\pi\to x}g\right)(x)=\int\limits_{\widehat{\mathbb{Q}_p^{\times}}}g(\pi)\pi^{-1}(x)d\pi.
\end{equation}
It follows from \eqref{T-4.2} that \cite{T1975}
\begin{equation}\Label{Mel-1}\
\left(\mathfrak{M}^{-1}_{\pi\to x}g\right)(x)=\frac{1}{a}\sum\limits_{\theta \in \widehat{S_0}}\overline{\theta(p^nx)}\int\limits_{-\pi/\ln p}^{\pi/\ln p}g(\theta|\cdot|^{i\beta})|x|_p^{-i\beta}d\beta,
\end{equation}
where the constant $a$ is determined in such way that the Plancherel identity holds.

The next theorem follows from the theory of the Fourier transform on locally compact Abelian groups (see \cite{Krein} and \cite[Theorem 31.5]{HR2}).
\begin{theorem}\Label{pr-Kr}
If $f\in L_{1,2}(\mathbb{Q}_p^{\times})$, then $\mathfrak{M}f\in L_2(\widehat{\mathbb{Q}_p^{\times}})$ and the Plancherel identity holds:
$$||\mathfrak{M}f||_{\widehat{L_2^{\times}}}=||f||_{L_2^{\times}}.$$
Moreover, extension of the Mellin's transform on $L_2(\mathbb{Q}_p^{\times})$ is a linear isometry $\mathfrak{M}:L_2(\mathbb{Q}_p^{\times})\to L_2(\widehat{\mathbb{Q}_p^{\times}})$.
\end{theorem}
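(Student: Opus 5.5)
The plan is to prove the theorem in two steps. First I would verify the Plancherel identity directly on the dense subspace $S^{\times}$, using the product structure \eqref{Qpx1} and the explicit formulas \eqref{2.1}, \eqref{T-4.1}, \eqref{T-4.2}. Then I would pass to $L_{1,2}(\mathbb{Q}_p^{\times})=L_1\cap L_2$ and to $L_2$ by density and continuity. I read $L_{1,2}$ as $L_1\cap L_2$.

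For the first step, take $f\in S^{\times}$. Write $x=p^{n}y$ with $y\in S_0$, and $\pi=\theta|\cdot|_p^{i\beta}$. By \eqref{2.1}, $\mathfrak{M}f(\pi)=\sum_n p^{-i\beta n}c_n(\theta)$, where $c_n(\theta)=p^{n}\int_{S_0}f(p^ny)\theta(y)\,dy$ up to the normalization in \eqref{T-4.1}. For fixed $n$, the function $y\mapsto f(p^ny)$ on the compact group $S_0$ is locally constant. Its Fourier coefficients with respect to $\widehat{S_0}$ therefore satisfy Parseval's identity:
$$\sum_{\theta}|c_n(\theta)|^2=C\int_{S_0}|f(p^ny)|^2dy,$$
where $C$ is the inverse of the Haar measure of $S_0$, namely $(1-p^{-1})^{-1}$ by \eqref{Vlad-11.3}. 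This follows from the orthogonality of characters. Equivalently, \eqref{Vlad-11.40} gives $\int_{S_0}\theta\,dy=0$ for ramified $\theta$, and together with completeness of characters on the finite quotients $S_0/A_m$ this yields Parseval.

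Since $f$ has compact support, only finitely many $n$ contribute. So for each $\theta$ the map $\beta\mapsto\mathfrak{M}f(\theta|\cdot|^{i\beta})$ is a trigonometric polynomial in $\beta\ln p$ on the period interval $(-\pi/\ln p,\pi/\ln p]$. Parseval for Fourier series gives
$$\int_{-\pi/\ln p}^{\pi/\ln p}\Bigl|\sum_n p^{-i\beta n}c_n(\theta)\Bigr|^2d\beta=\frac{2\pi}{\ln p}\sum_n|c_n(\theta)|^2.$$
I would then sum over $\theta$ with the weight $1/a$ from \eqref{T-4.2} and interchange the finite sum in $n$ with the sum over $\theta$. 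Recombining with \eqref{T-4.1} gives $\|\mathfrak{M}f\|^2=\kappa\|f\|^2$ for an explicit constant $\kappa$. The constant $a$ is, as the paper states, chosen precisely so that $\kappa=1$.

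For the second step, $S^{\times}$ is dense in $L_2(\mathbb{Q}_p^{\times})$, since locally constant compactly supported functions are dense. So $\mathfrak{M}$ extends uniquely to a linear isometry on $L_2$.

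To get the identity for every $f\in L_1\cap L_2$, not only for the abstract extension, I need the extension to agree with the integral \eqref{2.1} there. I would take $f_j\in S^{\times}$ with $f_j\to f$ in both $L_1$ and $L_2$. Such a sequence exists: truncate $f$ to an annulus $p^{-N}\le|x|_p\le p^N$, then average over cosets of $A_m$, which is a conditional expectation and contracts both norms. With this sequence, $\mathfrak{M}f_j\to\mathfrak{M}f$ uniformly because $|\pi|=1$, and also in $L_2$ to the extension. Hence the two agree almost everywhere, and the identity holds on $L_{1,2}$.

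The main obstacle is the bookkeeping: matching the normalizations of \eqref{T-4.1}, \eqref{2.1} (the $n$ versus $-n$ convention) and the measure of $S_0$, and justifying the interchange of the $\theta$-sum with the $\beta$-integral. On $S^{\times}$ the interchange is harmless, because only characters of rank at most $m$ give nonzero $c_n(\theta)$, so the $\theta$-sum is finite. Alternatively, the whole statement is the abstract Plancherel theorem for the locally compact abelian group $\mathbb{Q}_p^{\times}$ \cite[Theorem 31.5]{HR2}, once one checks that \eqref{T-4.2} is a Haar measure on the dual correctly normalized by $a$. I would cite this as a fallback.
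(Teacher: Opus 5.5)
Your argument is sound in outline, but it takes a different route from the paper. The paper gives no computation at all: it invokes the abstract Plancherel theorem for locally compact abelian groups (Krein; Hewitt--Ross, Theorem 31.5) and treats \eqref{T-4.2} only as an explicit form of the dual Haar measure, with $a$ fixed by the Plancherel normalization. You instead reprove that theorem for the concrete group $p^{\mathbb{Z}}\times S_0$, whose dual is a discrete group times a circle. On $S^{\times}$ the identity reduces to two classical facts. In the angular variable it is Parseval for the characters of the finite group $S_0/A_m$; in the radial variable it is Parseval for trigonometric polynomials in $\beta$. All sums there are finite. Your simultaneous $L^1$/$L^2$ approximation then correctly identifies the $L^2$-extension with the integral \eqref{2.1} on $L^1\cap L^2$.

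The citation is shorter, and it also yields surjectivity and the inversion formula \eqref{Mel-1}. The paper uses both later (surjectivity in Theorem~\ref{SemTt}, unitarity in Corollary~\ref{RepTt}) without stating them in Theorem~\ref{pr-Kr}. Your route is self-contained and determines $a$ explicitly. It also gives surjectivity just as cheaply: the image of $S^{\times}$ contains, up to constants, the functions $\theta|\cdot|_p^{i\beta}\mapsto\delta_{\theta,\theta_0}\,p^{-i\beta n}$. Their span is dense in $L^2(\widehat{\mathbb{Q}_p^{\times}})$, and the range of an isometry is closed.

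Two normalizations in your sketch need fixing. First, with $dx^{\times}$ the substitution $x=p^{n}y$ gives $dx/|x|_p=dy$ exactly, so $c_n(\theta)=\int_{S_0}f(p^{n}y)\theta(y)\,dy$ with no factor $p^{n}$. This one matters: with the extra factor, the sum over $n$ would produce $\sum_n p^{2n}\int_{S_0}|f(p^{n}y)|^2\,dy$, which is not a multiple of $\|f\|^2$. Second, Parseval on $S_0$ reads $\sum_\theta|c_n(\theta)|^2=(1-p^{-1})\int_{S_0}|f(p^{n}y)|^2\,dy$. The constant is the measure of $S_0$, not its inverse; you can check this with $f=\mathbf{1}_{S_0}$. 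With both corrections, $\kappa=2\pi(1-p^{-1})/(a\ln p)$, so the Plancherel normalization is $a=2\pi(1-p^{-1})/\ln p$.
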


\section{The multiplicative Vladimirov-Taibleson operator}

Let $\alpha>0$. Denote $c_{\alpha}=\dfrac{1-p^{\alpha}}{1-p^{-\alpha-1}}$, and let $B_n^{\times}= B_n\setminus\{0\}$, denote a punctured ball, $n\in\mathbb{Z}$.

For $\varphi\in S^{\times}(\mathbb{Q}_p^{\times})$ define the operator
\begin{equation}\Label{3.1}
(W_0^{\alpha}\varphi)(x)=c_{\alpha}\int\limits_{\mathbb{Q}_p^{\times}}\frac{\varphi(xy^{-1})-\varphi(x)}{|1-y|_p^{\alpha+1}}dy.
\end{equation}

\begin{theorem}\Label{th1}
For $1\leq\rho\leq\infty$, the operator $W_0^{\alpha}:S^{\times}\to L^{\rho}(\mathbb{Q}_p^{\times})$ is a correctly defined linear operator. Moreover,
\begin{equation}\Label{3.2}\
\left(\mathfrak{M}W_0^{\alpha}\varphi\right)(\pi)=c_{\alpha}\left\{\int\limits_{\mathbb{Q}_p^{\times}}\frac{\pi(y)-1}{|1-y|_p^{\alpha+1}}dy\right\}\left(\mathfrak{M}\varphi\right)(\pi).
\end{equation}
\end{theorem}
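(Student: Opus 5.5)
The proof has two parts: first show that $W_0^\alpha\varphi$ is well defined and lies in every $L^\rho(\mathbb{Q}_p^{\times})$, then compute its Mellin transform by Fubini.

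\emph{Step 1: local behaviour of the integrand.} Fix $\varphi\in S^{\times}$, constant on cosets of $A_m$ and supported in $\{p^{-N}\le|x|_p\le p^N\}$. If $|1-y|_p\le p^{-m}$, then $y^{-1}\in A_m$ and $\varphi(xy^{-1})-\varphi(x)=0$. Hence the integral in \eqref{3.1} runs only over $|1-y|_p>p^{-m}$, where the kernel is bounded by $p^{m(\alpha+1)}$. The singularity at $y=1$ therefore disappears.

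\emph{Step 2: behaviour at infinity and $L^\rho$ bounds.} For large $|y|_p$ the kernel $|1-y|_p^{-\alpha-1}=|y|_p^{-\alpha-1}$ is integrable with respect to $dy$, since $\alpha>0$ and \eqref{Vlad-11.3} applies. Near $y=0$ the kernel equals $1$ and the region has finite $dy$-measure. So $\int_{|1-y|_p>p^{-m}}|1-y|_p^{-\alpha-1}\,dy=:K_m<\infty$ and $|W_0^\alpha\varphi(x)|\le 2|c_\alpha|K_m\|\varphi\|_\infty$, giving the case $\rho=\infty$.

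For $\rho<\infty$ I split the integral into $I_1(x)=c_\alpha\int\varphi(xy^{-1})|1-y|_p^{-\alpha-1}dy$ and $I_2(x)=-c_\alpha K_m\varphi(x)$, both over the region $|1-y|_p>p^{-m}$. The term $I_2$ is clearly in every $L^\rho$. For $I_1$, substitute $y=xz^{-1}$, so that $dy=|x|_p|z|_p^{-2}dz$. The new integrand is supported on $|z|_p\in[p^{-N},p^N]$. For $|x|_p$ large, $|1-xz^{-1}|_p=|x|_p/|z|_p$, which gives $|I_1(x)|\lesssim|x|_p^{-\alpha}$; this lies in $L^\rho(dx^\times)$ at infinity because $\sum_n p^{-n\alpha\rho}<\infty$. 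For $|x|_p$ small, $|1-xz^{-1}|_p=1$, which gives $|I_1(x)|\lesssim|x|_p$; this lies in $L^\rho$ near $0$. On the compact middle annulus $I_1$ is bounded. Linearity is obvious. The main care in the whole proof is in these tail estimates, since the measure $dy$ in \eqref{3.1} is additive rather than multiplicative, so the integrability has to be checked against $dx^\times$ by hand.

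\emph{Step 3: the Mellin transform formula.} Apply \eqref{2.1} to $W_0^\alpha\varphi$, which is in $L^1(\mathbb{Q}_p^{\times})$ by Step 2, and write it as
\[
c_\alpha\int_{\mathbb{Q}_p^{\times}}\int_{\mathbb{Q}_p^{\times}}\frac{\varphi(xy^{-1})-\varphi(x)}{|1-y|_p^{\alpha+1}}\,\pi(x)\,dy\,\frac{dx}{|x|_p}.
\]
To justify Fubini, note that the absolute integrand is supported in $|1-y|_p>p^{-m}$. It is bounded by
\[
|1-y|_p^{-\alpha-1}\bigl(|\varphi(xy^{-1})|+|\varphi(x)|\bigr),
\]
and its $dx^\times$-integral equals $2\|\varphi\|_{L^1(dx^\times)}|1-y|_p^{-\alpha-1}$ by invariance of the Haar measure. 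This is integrable in $y$ by Step 2. After exchanging the order of integration, substitute $x=ty$ in the term with $\varphi(xy^{-1})$. Invariance of $dx^\times$ and multiplicativity \eqref{pi} give $\pi(y)(\mathfrak{M}\varphi)(\pi)$, while the other term gives $(\mathfrak{M}\varphi)(\pi)$. Collecting terms yields \eqref{3.2}.

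Finally, I must check that the bracket in \eqref{3.2} converges for every $\pi$. This follows because $\pi(y)-1=0$ when $y\in A_k$ with $k=\rank\pi$ (or $k=1$ in the unramified case after accounting for the $|y|_p^{i\beta}$ factor, which equals $1$ on $S_0$). So the bracket again reduces to an integral over $|1-y|_p>p^{-k}$, which is finite as in Step 2. Splitting the integral as $\int(\pi(y)-1)\ldots$ is then legitimate, because both pieces are integrated over the same region $|1-y|_p>p^{-\max(m,k)}$, on which each piece converges separately.
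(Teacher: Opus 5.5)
Your proof is correct, and it differs from the paper's on both halves.

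For the $L^\rho$ claim, the paper splits $\mathbb{Q}_p^{\times}$ into $B_{-1}^{\times}$, $S_0$ and $\mathbb{Q}_p^{\times}\setminus B_0$. It uses the $A_m$-invariance only to reduce the $S_0$ part to the finitely many shells $A_j\setminus A_{j+1}$, $j<m$. It then writes every piece as a multiplicative convolution of $\varphi$ with an $L^1(\mathbb{Q}_p^{\times})$ kernel minus a constant times $\varphi$, so that $L^1\ast L^\rho\subset L^\rho$ settles all $\rho$ at once. You instead truncate globally to $\{|1-y|_p>p^{-m}\}$ and check integrability against $dx^{\times}$ by hand through the substitution $y=xz^{-1}$. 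This gives the explicit decay $|x|_p^{-\alpha}$ at infinity and $|x|_p$ at $0$, but it is longer than necessary. Your truncation already produces a single $L^1$ kernel: $I_1=c_\alpha\,\varphi\ast g$ with $g(y)=|y|_p|1-y|_p^{-\alpha-1}\mathbf{1}_{\{|1-y|_p>p^{-m}\}}(y)$ and $\|g\|_{L^1(\mathbb{Q}_p^{\times})}=K_m$. Hence $\|W_0^\alpha\varphi\|_{L^\rho}\le 2|c_\alpha|K_m\|\varphi\|_{L^\rho}$ follows in one line.

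For the Mellin identity, your argument is the more rigorous of the two. The paper separates the $\varphi(xy^{-1})$ and $\varphi(x)$ terms over all of $\mathbb{Q}_p^{\times}$ and interchanges the integrals without comment, even though $\int_{\mathbb{Q}_p^{\times}}|1-y|_p^{-\alpha-1}\,dy$ diverges at $y=1$. Restricting to a region where each term is absolutely integrable, as you do, is exactly what makes Fubini and the splitting legitimate.

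One point should be stated explicitly. When $k=\operatorname{rank}\pi>m$, the Fubini step must be run on $\{|1-y|_p>p^{-\max(m,k)}\}$, not on $\{|1-y|_p>p^{-m}\}$ as written in Step 3. This is allowed because the integrand of $W_0^\alpha\varphi$ vanishes on $A_m\supset A_{\max(m,k)}$. Otherwise ``collecting terms'' yields only the truncated bracket, and you would need the additional remark that $(\mathfrak{M}\varphi)(\pi)=0$ for such $\pi$.
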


\begin{proof}
Let $\varphi\in S^{\times}$. Then
\begin{align*}
(W_0^{\alpha}\varphi)(x)&=\int\limits_{B_{-1}^{\times}}\frac{\varphi(xy^{-1})-\varphi(x)}{|1-y|_p^{\alpha+1}}dy+\int\limits_{S_0}\frac{\varphi(xy^{-1})-\varphi(x)}{|1-y|_p^{\alpha+1}}dy+\int\limits_{\mathbb{Q}_p^{\times}\setminus B_0}\frac{\varphi(xy^{-1})-\varphi(x)}{|1-y|_p^{\alpha+1}}dy=\\
&=I_1+I_2+I_3.
\end{align*}

We have
$$I_1=\int\limits_{B_{-1}^{\times}}\left(\varphi(xy^{-1})-\varphi(x)\right)dy=\int\limits_{B_{-1}^{\times}}\varphi(xy^{-1})dy-\int\limits_{B_{-1}^{\times}}\varphi(x)dy,$$
$$\int\limits_{B_{-1}^{\times}}\varphi(xy^{-1})dy=\int\limits_{\mathbb{Q}_p^{\times}}\varphi(xy^{-1})\mathbf{1}_{B_{-1}^{\times}}(y)\frac{|y|_p}{|y|_p}dy=(\varphi\ast(\mathbf{1}_{B_{-1}^{\times}}\cdot|\cdot|_p))(x).$$
The function $\mathbf{1}_{B_{-1}^{\times}}\cdot|\cdot|_p$ is in $L^1(\mathbb{Q}_p^{\times})$, since
\begin{align*}
\int\limits_{\mathbb{Q}_p^{\times}}\mathbf{1}_{B_{-1}^{\times}}(y)\cdot|y|_p\frac{dy}{|y|_p}&=\int\limits_{B_{-1}^{\times}}dy=\sum\limits_{n=-\infty}^{-1}\int\limits_{S_n}dy=\\
&=\sum\limits_{n=-\infty}^{-1}(1-p^{-1})p^n=(1-p^{-1})\sum\limits_{n=-\infty}^{-1}p^{n}=(1-p^{-1})\frac{p^{-1}}{1-p^{-1}}=p^{-1}.
\end{align*}
Since $\varphi\in L^{\rho}(\mathbb{Q}_p^{\times})$, then $I_1\in L^{\rho}(\mathbb{Q}_p^{\times})$.

We have
$$I_3=\int\limits_{\mathbb{Q}_p^{\times}\setminus B_0}\frac{\varphi(xy^{-1})-\varphi(x)}{|y|_p^{\alpha+1}}dy=\int\limits_{\mathbb{Q}_p^{\times}\setminus B_0}\frac{\varphi(xy^{-1})}{|y|_p^{\alpha+1}}dy-\int\limits_{\mathbb{Q}_p^{\times}\setminus B_0}\frac{\varphi(x)}{|y|_p^{\alpha+1}}dy,$$
$$\int\limits_{\mathbb{Q}_p^{\times}\setminus B_0}\frac{\varphi(xy^{-1})}{|y|_p^{\alpha+1}}dy=\int\limits_{\mathbb{Q}_p^{\times}}\frac{\varphi(xy^{-1})}{|y|_p^{\alpha}}\mathbf{1}_{\mathbb{Q}_p^{\times}\setminus B_0}(y)\frac{dy}{|y|_p}=\Big(\varphi\ast\frac{\mathbf{1}_{\mathbb{Q}_p^*\setminus B_0}}{|\cdot|_p^{\alpha}}\Big)(x).$$
The function $\dfrac{\mathbf{1}_{\mathbb{Q}_p^{\times}\setminus B_0}}{|\cdot|_p^{\alpha}}$ belongs to $L^1(\mathbb{Q}_p^{\times})$, since
$$\int\limits_{\mathbb{Q}_p^{\times}}\frac{\mathbf{1}_{\mathbb{Q}_p^{\times}\setminus B_0}(y)}{|y|_p^{\alpha}}\frac{dy}{|y|_p}=\sum\limits_{n=1}^{\infty}\int\limits_{S_n}\frac{dy}{|y|_p^{\alpha+1}}=(1-p^{-1})\sum\limits_{n=1}^{\infty}p^{-n\alpha}=(1-p^{-1})\frac{p^{-\alpha}}{1-p^{-\alpha}}.$$
Hence, $I_3\in L_{\rho}(\mathbb{Q}_p^{\times})$.

Since $\varphi\in S^{\times}$, there exists such $m\geq1$ that $\varphi(xy^{-1})=\varphi(x)$ when $|1-y^{-1}|_p\leq p^{-m}$. Consider the decomposition of the unit spehere $S_0=\bigcup\limits_{j=0}^{\infty}(A_j\setminus A_{j+1})$. Then $A_j\setminus A_{j+1}=\{x\in\mathbb{Q}_p^{\times}:|1-x|_p=p^{-m}\}$, $j\geq1$, $A_0\setminus A_1\subset S_0(1)$.
\begin{align*}
I_2&=\sum\limits_{j=0}^{\infty}\int\limits_{A_j\setminus A_{j+1}}\frac{\varphi(xy^{-1})-\varphi(x)}{|1-y|_p^{\alpha+1}}dy=\sum\limits_{j=0}^{m-1}\int\limits_{A_j\setminus A_{j+1}}\frac{\varphi(xy^{-1})-\varphi(x)}{|1-y|_p^{\alpha+1}}dy=\\
&=\sum\limits_{j=0}^{m-1}\int\limits_{A_j\setminus A_{j+1}}\frac{\varphi(xy^{-1})-\varphi(x)}{p^{-(\alpha+1)j}}dy=\sum\limits_{j=0}^{m-1}p^{(\alpha+1)j}\int\limits_{A_j\setminus A_{j+1}}\left(\varphi(xy^{-1})-\varphi(x)\right)dy.
\end{align*}
We have
\begin{align*}
\int\limits_{A_j\setminus A_{j+1}}\left(\varphi(xy^{-1})-\varphi(x)\right)dy&=\int\limits_{A_j\setminus A_{j+1}}\varphi(xy^{-1})dy-\varphi(x)\int\limits_{A_j\setminus A_{j+1}}dy=\\
&=(\varphi\ast\mathbf{1}_{A_j\setminus A_{j+1}})(x)-\varphi(x)\int\limits_{A_j\setminus A_{j+1}}dy.
\end{align*}
Since $\varphi\in L_{\rho}(\mathbb{Q}_p^{\times})$, we have $I_2\in L_{\rho}(\mathbb{Q}_p^{\times})$.
Therefore, $W_0^{\alpha}\varphi\in L^{\rho}(\mathbb{Q}_p^{\times})$ and $W_0^{\alpha}:S^{\times}\to L^{\rho}(\mathbb{Q}_p^{\times})$ is correctly defined.
Further, we have
\begin{align*}
\mathfrak{M}[W_0^{\alpha}\varphi](\pi)&=c_{\alpha}\int\limits_{\mathbb{Q}_p^{\times}}[W_0^{\alpha}\varphi](x)\pi(x)\frac{dx}{|x|_p}=c_{\alpha}\int\limits_{\mathbb{Q}_p^{\times}}\left\{\int\limits_{\mathbb{Q}_p^{\times}}\frac{\varphi(xy^{-1})-\varphi(x)}{|1-y|_p^{\alpha+1}}dy\right\}\pi(x)\frac{dx}{|x|_p}=\\
&=c_{\alpha}\int\limits_{\mathbb{Q}_p^{\times}}\int\limits_{\mathbb{Q}_p^*}\frac{\varphi(xy^{-1})}{|1-y|_p^{\alpha+1}}\pi(x)\frac{dx}{|x|_p}dy-c_{\alpha}\int\limits_{\mathbb{Q}_p^{\times}}\int\limits_{\mathbb{Q}_p^{\times}}\frac{\varphi(x)}{|1-y|_p^{\alpha+1}}\pi(x)\frac{dx}{|x|_p}dy.
\end{align*}
After the change of variables $xy^{-1}=z$, $dx=|y|_pdz$ in the inner integral of the first term, we get
\begin{align*}
\mathfrak{M}[W_0^{\alpha}\varphi](\pi)&=c_{\alpha}\int\limits_{\mathbb{Q}_p^{\times}}\int\limits_{\mathbb{Q}_p^{\times}}\frac{\varphi(z)}{|1-y|_p^{\alpha+1}}\pi(yz)\frac{|y|_pdz}{|y|_p|z|_p}\frac{dy}{|y|_p}-c_{\alpha}\int\limits_{\mathbb{Q}_p^{\times}}\frac{1}{|1-y|_p^{\alpha+1}}[\mathfrak{M}\varphi](\pi)dy=\\
&=c_{\alpha}\left\{\int\limits_{\mathbb{Q}_p^{\times}}\frac{\pi(y)-1}{|1-y|_p^{\alpha+1}}dy\right\}[\mathfrak{M}\varphi](\pi).
\end{align*}
\end{proof}

Let us denote
\begin{equation}\Label{3.3}\
K(\pi)=c_{\alpha}\int\limits_{\mathbb{Q}_p^\times}\frac{\pi(y)-1}{|1-y|_p^{\alpha+1}}dy,\;\pi\in\widehat{\mathbb{Q}_p^{\times}},
\end{equation}
where $c_{\alpha}=\dfrac{1-p^{\alpha}}{1-p^{-\alpha-1}}$.
Then the operator $W_0^{\alpha}:S^{\times}\to L^2(\mathbb{Q}_p^{\times})$ is a pseudodifferential operator with the symbol $K$ \eqref{3.3}, namely, it can be written as
\begin{equation}\Label{ghm}
(W_0^{\alpha}\varphi)(x)=\left(\mathfrak{M}^{-1}_{\pi\to x}\left[K\mathfrak{M}_{x\to\pi}\varphi\right]\right)(x),\;\varphi\in S^{\times},\;x\in\mathbb{Q}_p^{\times}.
\end{equation}

Moreover, the integral in \eqref{3.3} is absolutely convergent.

\begin{lemma}\Label{lemma10.1}
Let $\pi\in\widehat{\mathbb Q_p^\times}$ be a multiplicative character. Then
\[
I(\pi):=
\int_{\mathbb Q_p^\times}
\frac{|1-\pi(y)|}{|1-y|_p^{\alpha+1}}\,dy
<\infty,
\qquad \alpha>0.
\]
\end{lemma}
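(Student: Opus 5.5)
The plan is to split $\mathbb Q_p^\times$ into the same three regions used in the proof of Theorem \ref{th1}: $B_{-1}^\times$, $S_0$, and $\mathbb Q_p^\times\setminus B_0$. On each region we estimate $|1-\pi(y)|$ either by the trivial bound $2$ (unitarity of $\pi$) or by its vanishing near $1$ (continuity, i.e.\ triviality on some $A_k$). Throughout, the measure is the additive Haar measure $dy$, as in \eqref{3.3}.

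\textbf{Region $B_{-1}^\times$.} For $|y|_p\le p^{-1}$ we have $|1-y|_p=1$. Hence the integrand is bounded by $2$, and
$$\int_{B_{-1}^\times}\frac{|1-\pi(y)|}{|1-y|_p^{\alpha+1}}\,dy\le 2\int_{B_{-1}}dy=2p^{-1},$$
by the computation already done in Theorem \ref{th1}.

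\textbf{Region $\mathbb Q_p^\times\setminus B_0$.} For $|y|_p=p^n$ with $n\ge1$ we have $|1-y|_p=p^n$. Using \eqref{Vlad-11.3}, the contribution is at most
$$2\sum_{n\ge1}(1-p^{-1})p^{n}p^{-n(\alpha+1)}=2(1-p^{-1})\sum_{n\ge1}p^{-n\alpha}<\infty,$$
and this series converges since $\alpha>0$.

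\textbf{Region $S_0$ (the main obstacle).} Here the denominator $|1-y|_p^{\alpha+1}$ is singular at $y=1$, and the trivial bound gives, by \eqref{Vlad-11.19}, an integral $\int_{S_0}|1-y|_p^{-\alpha-1}dy$ that diverges. So we must use continuity of $\pi$. By the statement cited from \cite{T1975}, there is $k\ge0$ such that $\pi$ is trivial on $A_k$; we may take $k\ge1$, since $A_1\subset A_0$. Then $1-\pi(y)=0$ whenever $|1-y|_p\le p^{-k}$.

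We decompose $S_0=\bigcup_{j\ge0}(A_j\setminus A_{j+1})$, where $|1-y|_p=p^{-j}$ on $A_j\setminus A_{j+1}$ for $j\ge1$, and $|1-y|_p\le1$ on $A_0\setminus A_1$. Only the pieces with $j\le k-1$ can contribute. On $A_0\setminus A_1$ we have $|1-y|_p=1$ except on the set of measure zero... in fact on all of $A_0\setminus A_1$, since $|y|_p=1$ and $y\notin 1+p\mathbb Z_p$ force $|1-y|_p=1$. Each piece with $1\le j\le k-1$ has measure at most $p^{-j}$. Therefore
$$\int_{S_0}\frac{|1-\pi(y)|}{|1-y|_p^{\alpha+1}}\,dy\le 2\sum_{j=0}^{k-1}p^{(\alpha+1)j}\,p^{-j}<\infty,$$
which is a finite sum.

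Adding the three estimates shows $I(\pi)<\infty$. The point requiring care is verifying that triviality on $A_k$ kills the singularity, and this is exactly what the continuity of $\pi$ provides.
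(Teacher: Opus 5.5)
Your proof is correct. It uses the same ingredients as the paper (the bound $|1-\pi(y)|\le 2$, triviality of $\pi$ on some $A_k$ near $y=1$, and geometric series), but organizes them differently.

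The paper splits by rank. For rank $k\ge1$ it decomposes the whole set $\{|1-y|_p>p^{-k}\}$ into spheres centered at $1$, $\{|1-y|_p=p^{-m}\}$ with $m\le k-1$, and obtains the single series $2(1-p^{-1})\sum_{m\le k-1}p^{m\alpha}$. For rank $0$ it uses the spheres $S_n$ centered at $0$, where the $S_0$ contribution vanishes identically.

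You instead apply the three-region split from the proof of Theorem~\ref{th1} to every character at once. On $B_{-1}^\times$ and $\mathbb Q_p^\times\setminus B_0$ the bound $|1-\pi|\le 2$ suffices, and continuity of $\pi$ is used only on $S_0$, where it reduces the integral to a finite sum.

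Your remark that one may always take $k\ge1$, since $A_1\subset A_0$, is what removes the case distinction. It also shows that the paper's Case~2 is unnecessary: a rank-zero character is trivial on $A_1$, so Case~1 with $k=1$ applies verbatim.

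The paper's version is more compact, with one series instead of three pieces. Yours makes explicit that the only singularity requiring continuity of $\pi$ is at $y=1$, and that the behaviour near $0$ and $\infty$ is controlled independently of $\pi$.

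One small point: the hedging about a measure-zero set on $A_0\setminus A_1$ can be dropped. There $|1-y|_p=1$ exactly, as you then note.
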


\begin{proof}
We consider separately the cases of characters of positive rank and of rank zero.

\medskip

\noindent {\bf Case 1.} Let us consider character $\pi$ with the rank $k\ge1$. Then $\pi$ is trivial on $A_k$, which means $1-\pi(y)=0$ whenever $|1-y|_p\le p^{-k}.$ Therefore
\[I(\pi) = \int_{|1-y|_p>p^{-k}}\frac{|1-\pi(y)|}{|1-y|_p^{\alpha+1}}\,dy.
\]
Since $|\pi(y)|=1,$ we have $|1-\pi(y)|\le2.$
Therefore
\[I(\pi)\leq 2 \int_{|1-y|_p>p^{-r}}
\frac{dy}{|1-y|_p^{\alpha+1}}.\]
Using the decomposition into spheres centered at $1$,
\[ \{y:\ |1-y|_p>p^{-k}\} = \bigcup_{m=-\infty}^{k-1}
\{y:\ |1-y|_p=p^{-m}\},\]
we obtain
\[ I(\pi) \leq 2(1-p^{-1}) \sum_{m=-\infty}^{k-1}
\frac{p^{-m}}{p^{-m(\alpha+1)}} =
2(1-p^{-1}) \sum_{m=-\infty}^{k-1}
p^{m\alpha}.\]
Since $\alpha>0$, this geometric series converges, therefore $I(\pi)<\infty.$

\medskip

\noindent {\bf Case 2.} Let character $\pi$ has rank \(0\). Then $\pi$ is trivial on $S_0$, so $ 1-\pi(y)=0$ on the whole unit sphere $S_0$. Therefore the singularity at \(y=1\) is completely removed.

Now decompose
\[\mathbb Q_p^\times = \Bigl(\bigcup_{n=-\infty}^{-1}S_n\Bigr) \cup S_0 \cup \Bigl(\bigcup_{n=1}^{\infty}S_n\Bigr), \]
where $S_n=\{y:\ |y|_p=p^n\}.$
For \(n<0\) we have $|1-y|_p=1,$ therefore
\[\int_{S_n} \frac{|1-\pi(y)|}{|1-y|_p^{\alpha+1}}
\,dy \leq 2(1-p^{-1})p^n, \]
and
\[\sum_{n=-\infty}^{-1}p^n<\infty.\]
For \(n>0\) we have $|1-y|_p=p^n,$ and therefore
\[\int_{S_n} \frac{|1-\pi(y)|}{|1-y|_p^{\alpha+1}}
\,dy \leq 2(1-p^{-1}) \frac{p^n}{p^{n(\alpha+1)}}
= 2(1-p^{-1})p^{-n\alpha},\]
and
\[ \sum_{n=1}^{\infty}p^{-n\alpha}<\infty.\]
Therefore $ I(\pi)<\infty$, which gives absolute convergence of integral in \eqref{3.3}.
\end{proof}

\begin{theorem}\Label{dissip}\
Let $\alpha>0$. Then the operator $(-W^\alpha)$ is dissipative on $L^2(\mathbb Q_p^\times)$, i.e.
\begin{equation}\Label{diss}\
\operatorname{Re}\bigl((-W^\alpha)f,f\bigr)\le 0, \quad \text{\rm for any}\ f\in
Dom \,(W^\alpha)
\end{equation}
\end{theorem}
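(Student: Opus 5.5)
The plan is to move everything to the Mellin side via Plancherel, where $-W^\alpha$ acts as multiplication by $-K(\pi)$, and then show that the symbol has nonnegative real part. I will take $W^\alpha$ to be the natural $L^2$-realization of the pseudodifferential operator \eqref{ghm}:
\[
Dom\,(W^\alpha)=\{f\in L^2(\mathbb Q_p^\times):\ K\,\mathfrak M f\in L^2(\widehat{\mathbb Q_p^\times})\},\qquad W^\alpha f=\mathfrak M^{-1}\bigl[K\,\mathfrak M f\bigr].
\]
This operator extends $W_0^\alpha$ from $S^\times$, by Theorem~\ref{th1}. If the paper instead defines $W^\alpha$ as the closure of $W_0^\alpha$, the argument below applies verbatim on $S^\times$ and passes to the closure by continuity of the inner product.

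\textbf{Step 1 (transfer to the Mellin side).} By Theorem~\ref{pr-Kr}, $\mathfrak M$ is a linear isometry of $L^2(\mathbb Q_p^\times)$ into $L^2(\widehat{\mathbb Q_p^\times})$. By polarization it preserves inner products. Hence for $f\in Dom\,(W^\alpha)$,
\[
\bigl((-W^\alpha)f,f\bigr)=-\int_{\widehat{\mathbb Q_p^\times}}K(\pi)\,|(\mathfrak M f)(\pi)|^2\,d\pi,
\]
and therefore
\[
\real\bigl((-W^\alpha)f,f\bigr)=-\int_{\widehat{\mathbb Q_p^\times}}\real K(\pi)\,|(\mathfrak M f)(\pi)|^2\,d\pi .
\]
The integral is well defined because both $K\mathfrak M f$ and $\mathfrak M f$ lie in $L^2$.

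\textbf{Step 2 (sign of the symbol).} It now suffices to show $\real K(\pi)\ge 0$ for every $\pi$. By Lemma~\ref{lemma10.1} the integral \eqref{3.3} converges absolutely, so we may take real parts under the integral sign:
\[
\real K(\pi)=c_\alpha\int_{\mathbb Q_p^\times}\frac{\real\pi(y)-1}{|1-y|_p^{\alpha+1}}\,dy .
\]
Since $|\pi(y)|=1$, the numerator satisfies $\real\pi(y)-1\le 0$, so the integral is $\le 0$. On the other hand, for $\alpha>0$ we have $1-p^{\alpha}<0$ and $1-p^{-\alpha-1}>0$, so $c_\alpha<0$. Their product is therefore $\ge0$, and substituting into Step 1 gives \eqref{diss}.

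\textbf{Main obstacle.} The content of the proof is just the sign of the symbol; the delicate point is bookkeeping. First, one must check that the inverse transform \eqref{Mel-1} is the inverse of the isometry $\mathfrak M$, so that $\mathfrak M W^\alpha f=K\mathfrak M f$ holds exactly in $L^2$. Second, one must make sure the Plancherel normalization constant $a$ enters both sides identically. Third, $K$ must be measurable on the dual group, which follows from its explicit computation or from dominated convergence in \eqref{3.3}. I would settle these first, relying on Theorem~\ref{pr-Kr} and the choice of $a$ made there, after which the estimate itself is immediate.
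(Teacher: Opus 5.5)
Your proposal is correct and follows the paper's proof essentially step for step: you pass to the Mellin side via the Plancherel identity (Theorem~\ref{pr-Kr}), where $W^\alpha$ acts as multiplication by $K$, and you obtain $\operatorname{Re}K(\pi)\ge 0$ by taking real parts under the absolutely convergent integral (Lemma~\ref{lemma10.1}), using $\operatorname{Re}\pi(y)\le 1$ and $c_\alpha<0$. Your domain $\{f\in L^2(\mathbb Q_p^\times): K\,\mathfrak M f\in L^2(\widehat{\mathbb Q_p^\times})\}$ is exactly the one the paper adopts for $W^\alpha$ in Section~\ref{Sec5}, and the bookkeeping points you flag (unitarity rather than mere isometry of $\mathfrak M$, and integrability of $K|\mathfrak M f|^2$) are ones the paper's proof relies on implicitly.
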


\begin{proof}
By Theorem~\ref{th1}, under the multiplicative Mellin transform, the operator $W^\alpha$ becomes the multiplication operator with symbol \eqref{3.3}.

Since by Lemma \ref{lemma10.1} the integral defining $K(\pi)$ is absolutely convergent, we may pass to the real part under the integral sign:
\[\operatorname{Re}K(\pi) = c_\alpha \int_{\mathbb Q_p^\times} \frac{\operatorname{Re}\pi(y)-1} {|1-y|_p^{\alpha+1}}\,dy. \]
Every character $\pi\in\widehat{\mathbb Q_p^\times}$
is unitary. Therefore $|\pi(y)|=1$ and as any complex number can be represented as
\[\pi (y)=e^{i\varphi(y)}, \quad \text{for any}\ y\in {\mathbb Q_p^\times}.\]
Consequently $\operatorname{Re}\pi(y)\le 1$. Hence $\operatorname{Re}\pi(y)-1\le 0$ for all $y\in\mathbb Q_p^\times$.
Since the denominator $|1-y|_p^{\alpha+1}$ is strictly positive and
$c_\alpha<0$, it follows
\begin{equation}\Label{Re-1}\
\operatorname{Re}K(\pi)\ge 0, \quad \text {for every}\ \ \pi\in\widehat{\mathbb Q_p^\times}.
\end{equation}
Now let $f\in Dom
\,(W^\alpha)$. By the Plancherel formula for the Mellin transform (Theorem \ref{pr-Kr}),
\[(W^\alpha f,f)=\int_{\widehat{\mathbb Q_p^\times}}
K(\pi)\,|\widehat f(\pi)|^2\,d\pi,\]
and therefore
\[\operatorname{Re}(W^\alpha f,f)=\int_{\widehat{\mathbb Q_p^\times}}
\operatorname{Re}K(\pi)\,|\widehat f(\pi)|^2\,d\pi
\ge 0.\]
which implies \eqref{diss} and finishes the proof.
\end{proof}

\begin{lemma}\Label{pr1}
Let $\pi\in\widehat{\mathbb{Q}_p^{\times}}$, $\rank\pi=k\geq1$. Then
\begin{equation}\Label{3.5}
K(\pi)=p^{\alpha k}.
\end{equation}
\end{lemma}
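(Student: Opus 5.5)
The plan is to compute $K(\pi)$ in \eqref{3.3} directly. Lemma \ref{lemma10.1} gives absolute convergence, so we may split the domain of integration as $\mathbb{Q}_p^\times=B_{-1}^\times\cup S_0\cup(\mathbb{Q}_p^\times\setminus B_0)$ and treat each piece separately.

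\textbf{The spheres $S_n$, $n\neq0$.} On each such sphere $|1-y|_p$ is constant: it equals $1$ for $n<0$ and $p^n$ for $n>0$. Hence by \eqref{Vlad-11.40}, since $\rank\pi=k\ge1$,
\[
\int_{S_n}\frac{\pi(y)}{|1-y|_p^{\alpha+1}}\,dy=0 .
\]
So only the term $-1$ survives there. By \eqref{Vlad-11.3}, the contribution of these spheres is
\[
-\Bigl[p^{-1}+(1-p^{-1})\frac{p^{-\alpha}}{1-p^{-\alpha}}\Bigr],
\]
which is exactly what was computed in the proof of Theorem \ref{th1}.

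\textbf{The unit sphere $S_0$.} This is the main step. Set
\[
F(\beta)=\int_{S_0}|y-1|_p^{\beta-1}\bigl(\pi(y)-1\bigr)\,dy .
\]
Since $\pi-1$ vanishes on $A_k$, the integrand is supported where $|1-y|_p\ge p^{-k+1}$. There $|1-y|_p$ takes only finitely many values, so $F$ is a finite sum of exponentials in $\beta$ and hence entire. For $\real\beta>0$, formulas \eqref{Vlad-11.44} and \eqref{Vlad-11.19} give
\[
F(\beta)=\Gamma_p(\beta)p^{-\beta k}-\frac{p-2+p^{-\beta}}{p(1-p^{-\beta})}.
\]
The right-hand side is meromorphic, so by the identity theorem this formula persists at $\beta=-\alpha$, which is not a pole for $\alpha>0$. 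The delicate point is that the two separate integrals diverge at $\beta=-\alpha$; only their difference is entire, and this is why one must continue $F$ as a whole. Evaluating at $\beta=-\alpha$ yields
\[
\Gamma_p(-\alpha)p^{\alpha k}-\frac{p-2+p^{\alpha}}{p(1-p^{\alpha})}.
\]

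\textbf{Assembling.} First, $\Gamma_p(-\alpha)=\dfrac{1-p^{-\alpha-1}}{1-p^{\alpha}}=c_\alpha^{-1}$, so multiplying the main term by $c_\alpha$ gives exactly $p^{\alpha k}$. It remains to check that the constant terms cancel. Put everything over the common denominator $p(1-p^\alpha)$:
\[
p^{-1}=\frac{1-p^\alpha}{p(1-p^\alpha)},\qquad (1-p^{-1})\frac{p^{-\alpha}}{1-p^{-\alpha}}=\frac{p-1}{p(p^\alpha-1)}=-\frac{p-1}{p(1-p^\alpha)} .
\]
The total constant then has numerator
\[
-(p-2+p^\alpha)-(1-p^\alpha)+(p-1)=0 .
\]
Therefore $K(\pi)=c_\alpha\,\Gamma_p(-\alpha)\,p^{\alpha k}=p^{\alpha k}$, which is \eqref{3.5}.
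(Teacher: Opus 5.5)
Your proof is correct and follows the paper's route: the same sphere-by-sphere decomposition, the vanishing of $\int_{S_n}\pi(y)\,dy$ for $n\neq0$ by \eqref{Vlad-11.40}, evaluation of the $S_0$ term via \eqref{Vlad-11.44} and \eqref{Vlad-11.19} at exponent $-\alpha$, and the same cancellation of constants using $c_\alpha\Gamma_p(-\alpha)=1$. Your version is more rigorous at one point: the paper splits the $S_0$ integral into two integrals that each diverge and substitutes $-\alpha$ into formulas stated only for $\real\alpha>0$, whereas you note that $F(\beta)$ is a finite exponential sum, hence entire, and agrees with the meromorphic right-hand side on $\real\beta>0$, which supplies the analytic-continuation justification the paper leaves implicit.
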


\begin{proof}
We have
$$K(\pi)=c_{\alpha}\sum\limits_{n=-\infty}^{+\infty}\int\limits_{S_n}\frac{\pi(y)-1}{|1-y|_p^{\alpha+1}}dy.$$

Let $n<0$. Then on $S_n$ we have $|y|_p=p^n<1$, so $|1-y|_p=\max\{1,|y|_p\}=1$. Considering the formula \eqref{Vlad-11.40}, we obtain
$$\int\limits_{S_n}\frac{\pi(y)-1}{|1-y|_p^{\alpha+1}}dy=\int\limits_{S_n}\pi(y)dy-\int\limits_{S_n}1dy=-(1-p^{-1})p^n.$$

Let $n=0$. Then it follows from formulas \eqref{Vlad-11.44}, \eqref{Vlad-11.19}
$$\int\limits_{S_0}\frac{\pi(y)-1}{|1-y|_p^{\alpha+1}}dy=\int\limits_{S_0}\frac{\pi(y)}{|1-y|_p^{\alpha+1}}dy-\int\limits_{S_0}\frac{1}{|1-y|_p^{\alpha+1}}dy=\Gamma_p(-\alpha)p^{k\alpha}-\frac{p-2+p^{\alpha}}{p(1-p^{\alpha})}.$$

Let $n>0$. Then on $S_n$ we have $|y|_p=p^n>1$, so $|1-y|_p=|y|_p$, and it follows from \eqref{Vlad-11.40} that
\begin{align*}
\int\limits_{S_n}\frac{\pi(y)-1}{|1-y|_p^{\alpha+1}}dy&
=\int\limits_{S_n}\frac{\pi(y)-1}{|y|_p^{\alpha+1}}dy=\\
&=\frac{1}{p^{n(\alpha+1)}}\left\{\int\limits_{S_n}\pi(y)dy-\int\limits_{S_n}1dy\right\}=-\frac{1}{p^{n\alpha}}(1-p^{-1}).
\end{align*}

Hence
\begin{align*}
K(\pi)&=c_{\alpha}\bigg\{-(1-p^{-1})\sum\limits_{n=-\infty}^{-1}p^n+\Gamma_p(-\alpha)p^{k\alpha}-\frac{p-2+p^{\alpha}}{p(1-p^{\alpha})}-(1-p^{-1})\sum\limits_{n=1}^{+\infty}\frac{1}{p^{\alpha n}}\bigg\}=\\
&=c_{\alpha}\bigg\{-(1-p^{-1})\frac{p^{-1}}{1-p^{-1}}+\Gamma_p(-\alpha)p^{k\alpha}-\frac{p-2+p^{\alpha}}{p(1-p^{\alpha})}-(1-p^{-1})\frac{p^{-\alpha}}{1-p^{-\alpha}}\bigg\}=\\
&=p^{\alpha k}.
\end{align*}
\end{proof}

\begin{proposition}\Label{pr2}
Let the character $\pi\in\widehat{\mathbb{Q}_p^{\times}}$ be such that $\rank\pi=0$, that is $\pi(|x|_p)=|x|_p^{i\beta}$, $x\in\mathbb{Q}_p^{\times}$, for some $\beta\in(-\frac{\pi}{\ln p},\frac{\pi}{\ln p}]$. Then
\begin{align}\Label{3.6}
K(\pi)&=K_0(\beta)=\\
\nonumber
&=c_{\alpha}(1-p^{-1})\bigg\{\frac{p^{-i\beta-1}}{1-p^{-i\beta-1}}-\frac{p^{-1}}{1-p^{-1}}+\frac{p^{i\beta-\alpha}}{1-p^{i\beta-\alpha}}-\frac{p^{-\alpha}}{1-p^{-\alpha}}\bigg\}.
\end{align}
\end{proposition}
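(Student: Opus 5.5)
The computation will follow the same shell decomposition as in the proof of Lemma~\ref{pr1}. Since the integral \eqref{3.3} converges absolutely by Lemma~\ref{lemma10.1}, we may write
$$K(\pi)=c_{\alpha}\sum_{n=-\infty}^{+\infty}\int_{S_n}\frac{\pi(y)-1}{|1-y|_p^{\alpha+1}}\,dy.$$
A rank-zero character is $\pi(y)=|y|_p^{i\beta}$. It is therefore constant on each sphere: on $S_n$ it equals $p^{i\beta n}$. The key simplification is the term $n=0$. There $\pi\equiv1$ on $S_0$, so the integrand vanishes identically and no singular integral over $S_0$ (no use of \eqref{Vlad-11.19} or \eqref{Vlad-11.44}) is needed.

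For $n<0$ we have $|1-y|_p=1$ on $S_n$. By \eqref{Vlad-11.3},
$$\int_{S_n}\frac{\pi(y)-1}{|1-y|_p^{\alpha+1}}\,dy=(p^{i\beta n}-1)(1-p^{-1})p^{n}.$$
For $n>0$ we have $|1-y|_p=|y|_p=p^n$. Again by \eqref{Vlad-11.3},
$$\int_{S_n}\frac{\pi(y)-1}{|1-y|_p^{\alpha+1}}\,dy=(p^{i\beta n}-1)(1-p^{-1})p^{-n\alpha}.$$

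It then remains to sum four geometric series, each of which converges absolutely. Substituting $m=-n$ in the negative part gives
$$\sum_{n\le -1}p^{(1+i\beta)n}=\sum_{m\ge1}p^{-m(1+i\beta)}=\frac{p^{-i\beta-1}}{1-p^{-i\beta-1}},\qquad \sum_{n\le-1}p^{n}=\frac{p^{-1}}{1-p^{-1}}.$$
For the positive part,
$$\sum_{n\ge1}p^{(i\beta-\alpha)n}=\frac{p^{i\beta-\alpha}}{1-p^{i\beta-\alpha}},\qquad \sum_{n\ge1}p^{-\alpha n}=\frac{p^{-\alpha}}{1-p^{-\alpha}}.$$
Convergence holds because $|p^{-1-i\beta}|=p^{-1}<1$ and $|p^{i\beta-\alpha}|=p^{-\alpha}<1$ for $\alpha>0$. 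Collecting the four sums with the common factor $c_\alpha(1-p^{-1})$ gives exactly \eqref{3.6}.

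The only delicate point is the orientation of the exponents: one has to check that the sign convention $\pi(y)=|y|_p^{i\beta}$ together with $|y|_p=p^{n}$ on $S_n$ produces $p^{-i\beta-1}$ in the first term and $p^{i\beta-\alpha}$ in the third. Apart from that, the argument is routine bookkeeping once the vanishing of the $S_0$ contribution is noted.
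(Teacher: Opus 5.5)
Your proposal is correct and matches the paper's proof step for step. Both decompose into spheres $S_n$, note that the $S_0$ contribution vanishes, evaluate the $n<0$ and $n>0$ shells via \eqref{Vlad-11.3}, and sum four geometric series. Your explicit use of Lemma~\ref{lemma10.1} and of the separate convergence of each series, to justify splitting the sums, is a small gain in rigor, and your bookkeeping avoids the typo $p^{ni\beta-1}$ (for $p^{ni\beta}-1$) in the paper's final display.
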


\begin{proof}
We have
$$K(\pi)=c_{\alpha}\sum\limits_{n=-\infty}^{+\infty}\int\limits_{S_n}\frac{|y|_p^{i\beta}-1}{|1-y|_p^{\alpha+1}}.$$

When $n<0$,
$$\int\limits_{S_n}\frac{|y|_p^{i\beta}-1}{|1-y|_p^{\alpha+1}}dy=\int\limits_{S_n}\left(|y|_p^{i\beta}-1\right)dy=(p^{ni\beta}-1)(1-p^{-1})p^n.$$
When $n=0$,
$$\int\limits_{S_0}\frac{|y|_p^{i\beta}-1}{|1-y|_p^{\alpha+1}}dy=0.$$
When $n>0$,
$$\int\limits_{S_n}\frac{|y|_p^{i\beta}-1}{|1-y|_p^{\alpha+1}}dy=\int\limits_{S_n}\frac{|y|_p^{i\beta}-1}{|y|_p^{\alpha+1}}dy=\int\limits_{S_n}\frac{p^{n i\beta}-1}{p^{n(\alpha+1)}}dy=\frac{p^{n i\beta}-1}{p^{n\alpha}}(1-p^{-1}).$$
Hence,
\begin{align*}
K(\pi)&=c_{\alpha}(1-p^{-1})\bigg\{\sum\limits_{n=-\infty}^{-1}(p^{ni\beta}-1)p^n+\sum\limits_{n=1}^{+\infty}\frac{p^{ni\beta-1}}{p^{n\alpha}}\bigg\}=\\
&=c_{\alpha}(1-p^{-1})\bigg\{\frac{p^{-i\beta-1}}{1-p^{-i\beta-1}}-\frac{p^{-1}}{1-p^{-1}}+\frac{p^{i\beta-\alpha}}{1-p^{i\beta-\alpha}}-\frac{p^{-\alpha}}{1-p^{-\alpha}}\bigg\}.
\end{align*}
\end{proof}

\section{Eigenfunctions of multiplicative operator}
In this section we find the eigenvalues of the operator $W^{\alpha}$.
\begin{lemma}\Label{lemma1}
The system of functions
\begin{equation}\Label{4.1}
\mathcal{L}=\left\{\varphi(x)=\frac{\theta(p^nx)}{\sqrt{1-p^{-1}}}\mathbf{1}_{S_n}(x),\;x\in \mathbb{Q}_p^{\times} :\theta\in\widehat{S_0},n\in\mathbb{Z}\right\},
\end{equation}
where $\theta$ is a character on the unit sphere $S_0$, forms an orthonormal basis in $L^2(\mathbb{Q}_p^{\times})$.
\end{lemma}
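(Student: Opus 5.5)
Here $\theta(p^nx)$ is understood as $\theta$ evaluated at the unit part $p^nx\in S_0$ when $x\in S_n$. The proof splits into orthonormality and completeness. The guiding idea is the decomposition \eqref{Qpx1}, $\mathbb{Q}_p^{\times}\cong p^{\mathbb Z}\times S_0$, together with \eqref{T-4.1}. These identify $L^2(\mathbb{Q}_p^{\times},dx^\times)$ with the orthogonal direct sum $\bigoplus_{n\in\mathbb Z}L^2(S_n,dx^\times)$. The map $f\mapsto f(p^{-n}\,\cdot)$ carries $L^2(S_n,dx^\times)$ onto $L^2(S_0,dy)$, using that $dx^\times = dx/|x|_p$ and that the rescaling $x=p^{-n}y$ multiplies $dx$ by $p^{n}=|x|_p$. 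Under this identification the problem reduces to the statement that the characters of the compact abelian group $S_0$, normalized by its Haar measure, form an orthonormal basis of $L^2(S_0)$.

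\emph{Orthonormality.} For $\varphi_{n,\theta},\varphi_{m,\theta'}\in\mathcal L$ with $n\neq m$ the supports $S_n$ and $S_m$ are disjoint, so the inner product vanishes. For $n=m$, I would compute
\[
(\varphi_{n,\theta},\varphi_{n,\theta'})=\frac{1}{1-p^{-1}}\int_{S_n}\theta(p^nx)\overline{\theta'(p^nx)}\frac{dx}{|x|_p}=\frac{1}{1-p^{-1}}\int_{S_0}(\theta\overline{\theta'})(y)\,dy .
\]
If $\theta=\theta'$ this equals $1$ by \eqref{Vlad-11.3} with $\gamma=0$. If $\theta\neq\theta'$, then $\chi=\theta\overline{\theta'}$ is a nontrivial character of $S_0$. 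Choose $u\in S_0$ with $\chi(u)\ne1$; invariance of $dy$ under $y\mapsto uy$ on $S_0$ gives $\int_{S_0}\chi=\chi(u)\int_{S_0}\chi$, so the integral is $0$. This also covers characters trivial on $A$ but nontrivial on the $\mathbb Z_{p-1}$ factor, where \eqref{Vlad-11.40} does not directly apply.

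\emph{Completeness.} This is the main step. By the direct-sum reduction it suffices to show that finite linear combinations of characters of $S_0$ are dense in $L^2(S_0)$. I would argue directly with the filtration $A_m$, since it fits the test-function definition. First, locally constant functions on $S_0$, i.e. functions constant on cosets of some $A_m$, are dense in $L^2(S_0)$. Indeed, $\bigcup_n S^\times$ restricted to $S_0$ is dense, or more concretely, conditional expectations onto the finite $\sigma$-algebras of $A_m$-cosets converge in $L^2$ by the martingale or Lebesgue differentiation theorem on the ultrametric space. Second, a function constant on cosets of $A_m$ is a function on the finite abelian group $S_0/A_m$. Finite Fourier analysis shows it is a linear combination of characters of $S_0/A_m$, and these lift to characters of $S_0$ trivial on $A_m$, all belonging to $\widehat{S_0}$.

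Alternatively, completeness follows from Pontryagin duality and the Peter--Weyl theorem for compact abelian groups, or from the Plancherel Theorem~\ref{pr-Kr} via \eqref{Mel-1}. In the latter route, if $f\perp\mathcal L$, then $\int_{S_n}f(x)\overline{\theta(p^nx)}\,dx^\times=0$ for all $n,\theta$. Then \eqref{2.1} gives $\mathfrak M f\equiv0$, hence $f=0$ by the isometry. I would present the finite-quotient argument as the main route and mention the Plancherel route as a check.
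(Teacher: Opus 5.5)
Your proof is correct and has the same architecture as the paper's: split $L^2(\mathbb{Q}_p^{\times})$ over the spheres $S_n$, transport each $L^2(S_n)$ to $L^2(S_0)$ by $x=p^{-n}y$ (which preserves $dx^{\times}$), and reduce to the completeness of $\widehat{S_0}$ in $L^2(S_0)$; the only real difference is that you prove orthogonality by translation invariance and completeness via the finite quotients $S_0/A_m$, whereas the paper simply invokes \eqref{Vlad-11.40} and cites \cite{K2020}. Two small corrections: a character trivial on $A=A_1$ but not on $S_0=A_0$ has rank $1$ in the paper's convention, so \eqref{Vlad-11.40} does apply to it; and you should not justify the density of locally constant functions by the density of $S^{\times}$, since in the paper that density (Lemma~\ref{lemma2}) is deduced from the present lemma, but your conditional-expectation argument settles this point on its own.
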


\begin{proof}
It is knows (see \cite{K2020}) that the set of all multiplicative characters $\theta$ on the unit sphere $S_0$ forms an othogonal basis in $L^2(S_0)$. For these characters we can evaluate their norms
$$||\theta||_{L^2(S_0)}^2=\int\limits_{S_0}|\theta(x)|^2\frac{dx}{|x|_p}=\int\limits_{S_0}dx=1-p^{-1}.$$
Therefore, the system of functions $\left\{\dfrac{\theta}{\sqrt{1-p^{-1}}}:\theta\in\widehat{S_0}\right\}$ is an orthonormal basis in $L^2(S_0)$ and the Parseval idenitity holds:
\begin{equation}\Label{pars-S0}
||f||_{L^2(S_0)}^2=\sum\limits_{\theta\in\widehat{S_0}}\abs*{\left(f,\frac{\theta}{\sqrt{1-p^{-1}}}\right)_{L^2(S_0)}}^2,\;f\in L^2(S_0).
\end{equation}

We will show that the system of functions $\left\{\dfrac{\theta(p^n\cdot)}{\sqrt{1-p^{-1}}}:\theta\in\widehat{S_0}\right\}$ is an orthonormal basis in $L^2(S_n)$, $n\in\mathbb{Z}$. Indeed, for $\theta\in\widehat{S_0}$ we have
$$\norm{\frac{\theta(p^n\cdot)}{\sqrt{1-p^{-1}}}}_{L^2(S_n)}^2=\int\limits_{S_n}\abs*{\frac{\theta(p^n\cdot)}{\sqrt{1-p^{-1}}}}^2\frac{dx}{|x|_p}=\frac{1}{1-p^{-1}}\int\limits_{S_n}\frac{dx}{p^n}=1,$$
and also for $\theta_1,\theta_2\in\widehat{S_0}$, $\theta_1\neq\theta_2$,
\begin{align*}
\left(\frac{\theta_1(p^n\cdot)}{\sqrt{1-p^{-1}}},\frac{\theta_2(p^n\cdot)}{\sqrt{1-p^{-1}}}\right)_{L^2(S_n)}&=\int\limits_{S_n}\frac{\theta_1(p^nx)\overline{\theta_2(p^nx)}}{1-p^{-1}}\frac{dx}{|x|_p}=\\
&\frac{1}{(1-p^{-1})p^n}\int\limits_{S_n}\theta_1(p^nx)\overline{\theta_2(p^nx)}dx.
\end{align*}
After the substitution $y=p^nx$, $dx=p^ndy$, it follows from formula \eqref{Vlad-11.40} that
$$\left(\frac{\theta_1(p^n\cdot)}{\sqrt{1-p^{-1}}},\frac{\theta_2(p^n\cdot)}{\sqrt{1-p^{-1}}}\right)_{L^2(S_n)}=\frac{1}{(1-p^{-1})p^n}\int\limits_{S_0}\theta_1(y)\overline{\theta_2(y)}dy=0.$$
Moreover, for any function $f\in L^2(S_n)$ we get
$$||f||_{L^2(S_n)}^2=\int\limits_{S_n}|f(x)|^2\frac{dx}{|x|_p}=\int\limits_{S_0}|f(p^{-n}y)|^2dy.$$

Let $g(y)=f(p^{-n}y)$, $y\in S_0$. Then $g\in L^2(S_0)$ and using \eqref{pars-S0}, we obtain
\begin{align*}
||f||_{L^2(S_n)}^2=||g||_{L^2(S_0)}^2&=\sum\limits_{\theta\in\widehat{S_0}}\abs*{\left(g,\frac{\theta}{\sqrt{1-p^{-1}}}\right)_{L^2(S_0)}}^2\\
&=\sum\limits_{\theta\in\widehat{S_0}}\abs*{\left(f(p^{-n}\cdot),\frac{\theta}{\sqrt{1-p^{-1}}}\right)_{L^2(S_0)}}^2.
\end{align*}
We have
\begin{align*}
\left(f(p^{-n}\cdot),\frac{\theta}{\sqrt{1-p^{-1}}}\right)_{L^2(S_0)}&=\frac{1}{\sqrt{1-p^{-1}}}\int\limits_{S_0}f(p^{-n}y)\overline{\theta(y)}\frac{dy}{|y|_p}=\\
&=\frac{1}{\sqrt{1-p^{-1}}}\int\limits_{S_n}f(x)\overline{\theta(p^nx)}\frac{dx}{|x|_p}=\left(f,\frac{\theta(p^n\cdot)}{\sqrt{1-p^{-1}}}\right)_{L^2(S_n)}.
\end{align*}
Hence,
\begin{equation}\Label{pars-Sn}\
||f||_{L^2(S_n)}^2=\sum\limits_{\theta\in\widehat{S_0}}\abs*{\left(f,\frac{\theta(p^n\cdot)}{\sqrt{1-p^{-1}}}\right)_{L^2(S_n)}}^2,
\end{equation}
so the Parseval identity holds.

Now consider $f\in L^2(\mathbb{Q}_p^{\times})$. The system $\mathcal{L}$ is orthonormal in $L^2(\mathbb{Q}_p^{\times})$. Moreover,
$$||f||_{L^2(\mathbb{Q}_p^{\times})}^2=\int\limits_{\mathbb{Q}_p^{\times}}|f(x)|^2\frac{dx}{|x|_p}=\sum\limits_{n=-\infty}^{+\infty}\int\limits_{S_n}|f(x)|^2\frac{dx}{|x|_p}=\sum\limits_{n=-\infty}^{+\infty}\int\limits_{S_n}||f_n||_{L^2(S_n)}^2,$$
where $f_n=f\restriction_{S_n}$, $n\in\mathbb{Z}$. It follows from \eqref{pars-Sn} that
\begin{align*}
||f||_{L^2(\mathbb{Q}_p^{\times})}^2&=\sum\limits_{n=-\infty}^{+\infty}\sum\limits_{\theta\in\widehat{S_0}}\abs*{\left(f_n,\frac{\theta(p^n\cdot)}{\sqrt{1-p^{-1}}}\right)_{L^2(S_n)}}^2=\\
&=\sum\limits_{n=-\infty}^{+\infty}\sum\limits_{\theta\in\widehat{S_0}}\abs*{\left(f,\frac{\theta(p^n\cdot)}{\sqrt{1-p^{-1}}\mathbf{1}_{S_n}}\right)_{L^2(\mathbb{Q}_p^{\times})}}^2.
\end{align*}
Since for $\mathcal{L}$ the Parseval identity hold, it follows that $\mathcal{L}$ is a basis in $L^2(\mathbb{Q}_p^{\times})$.
\end{proof}

\begin{lemma}\Label{lemma2}
The set of test functions $S^{\times}$ is dense in $L^2(\mathbb{Q}_p^{\times})$.
\end{lemma}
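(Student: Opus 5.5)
We will derive the density from the orthonormal basis $\mathcal{L}$ of Lemma \ref{lemma1}. Since the linear span of an orthonormal basis is dense in $L^2(\mathbb{Q}_p^{\times})$, it suffices to show that every element of $\mathcal{L}$ lies in $S^{\times}$. Indeed, $S^{\times}$ is a linear space, so it would then contain the span of $\mathcal{L}$ and hence be dense.

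Fix $n\in\mathbb{Z}$ and $\theta\in\widehat{S_0}$, and set $\varphi(x)=(1-p^{-1})^{-1/2}\theta(p^nx)\mathbf{1}_{S_n}(x)$. The first condition in the definition of $S^{\times}$ is immediate, because $\varphi$ vanishes unless $|x|_p=p^n$. Its support therefore lies in $\{p^{-|n|}\le|x|_p\le p^{|n|}\}$, and we may take $N=|n|+1$ in the definition.

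The second condition is local constancy on cosets of some $A_m$ with $m\ge1$. We will use the fact quoted from \cite{T1975} that every character is trivial on some $A_k$. We apply it to $\theta$, extended to $\mathbb{Q}_p^{\times}$ by $\theta(p^j u)=\theta(u)$, which is legitimate by the decomposition \eqref{Qpx1}. Take $m=\max(k,1)$ and $a$ with $|1-a|_p\le p^{-m}$. Then $a\in A_m\subset S_0$, so $|xa|_p=|x|_p$ and $\mathbf{1}_{S_n}(xa)=\mathbf{1}_{S_n}(x)$. For $x\in S_n$ we have $p^nx\in S_0$, hence $\theta(p^nxa)=\theta(p^nx)\theta(a)=\theta(p^nx)$. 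Consequently $\varphi(xa)=\varphi(x)$ for all $x$, so $\varphi\in S^{\times}$.

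The only point needing care is the existence of a finite rank $k$ for every $\theta\in\widehat{S_0}$. This follows from continuity of $\theta$: the image of a small neighbourhood $A_k$ must lie in a small neighbourhood of $1$ in the unit circle, and the only subgroup contained in such a neighbourhood is $\{1\}$. Since the paper already cites this fact, it can be invoked directly. The density then follows from completeness of $\mathcal{L}$.
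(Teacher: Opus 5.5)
Your proof is correct and follows the same route as the paper: show that each element of $\mathcal{L}$ lies in $S^{\times}$ (support in $S_n$, and invariance under $A_m$ because $\theta$ is trivial on some $A_k$), then use linearity of $S^{\times}$ and completeness of $\mathcal{L}$ from Lemma \ref{lemma1}. Taking $m=\max(k,1)$ also covers the trivial character $\theta=1$, a case the paper's proof does not treat explicitly.
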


\begin{proof}
We will show that $\mathcal{L}\subset S^{\times}$. Indeed, let $\varphi\in\mathcal{L}$. Then $\supp\varphi=S_n$. If $\rank(\theta)=k\geq1$, then for all $a\in A_k$ we have $a\in S_0$, so $|xa|_p=|x|_p|a|_p=|x|_p$. Then the condition $\mathbf{1}_{S_n}(xa)=1$ is equivalent to the condition $\mathbf{1}_{S_n}(x)=1$, and we have for all $x\in\mathbb{Q}_p^{\times}$, $a\in A_k$:
$$\varphi(xa)=\theta(p^nxa)\mathbf{1}_{S_n}(xa)=\varphi(x).$$
Note that $S^{\times}$ is a linear set, that it for all $\alpha,\beta\in\mathbb{C}$, $\varphi,\psi\in S^{\times}$ it holds that  $\alpha\varphi+\beta\psi\in S^{\times}$. Hence, $span(\mathcal{L})\subset S^{\times}$. Since $\mathcal{L}$ is a basis in $L^2(\mathbb{Q}_p^{\times})$, it follows that $span(\mathcal{L})$, qnd therefore $S^{\times}$, is dense in $L^2(\mathbb{Q}_p^{\times})$.
\end{proof}

\begin{lemma}\Label{lemma4}
If $\varphi\in\mathcal{L}$, where the set $\mathcal{L}$ is defined in \eqref{4.1}, $\rank \theta=k\geq1$, then $\varphi$ is an eigenfunction of the operator $W_0^{\alpha}$ with the eigenvalue $\lambda_k=p^{\alpha k}$, i.e.
\begin{equation}\Label{eigen}
(W_0^{\alpha}\varphi)(x)=p^{\alpha k}\varphi(x),\;x\in\mathbb{Q}_p^{\times}.
\end{equation}
\end{lemma}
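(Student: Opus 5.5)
We will compute $W_0^\alpha\varphi$ directly from the defining integral \eqref{3.1}, reducing it to the integral computed in Lemma~\ref{pr1}. Let $\varphi(x)=(1-p^{-1})^{-1/2}\theta(p^nx)\mathbf{1}_{S_n}(x)$, where $\rank\theta=k\ge1$. The key observation is that the extension $\pi(x):=\theta(x|x|_p)$, i.e. $\pi(p^{-m}u)=\theta(u)$ for $u\in S_0$, is a unitary multiplicative character of $\mathbb{Q}_p^\times$. It is unramified in the norm direction ($\beta=0$) and has $\rank\pi=k$. For $x\in S_n$ we have $\varphi(x)=(1-p^{-1})^{-1/2}\pi(x)$. (Note $\theta(p^nx)$ is exactly $\pi(x)$ on $S_n$, since $p^nx\in S_0$.)

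\textbf{Step 1: the case $x\notin S_n$.} Here $\varphi(x)=0$, and we must show $(W_0^\alpha\varphi)(x)=0$. Write $|x|_p=p^m$ with $m\neq n$. Then $\varphi(xy^{-1})\neq0$ only when $y\in S_{m-n}$, so
$$(W_0^\alpha\varphi)(x)=c_\alpha\int_{S_{m-n}}\frac{\varphi(xy^{-1})}{|1-y|_p^{\alpha+1}}\,dy .$$
Since $m-n\neq0$, the weight $|1-y|_p$ is constant on $S_{m-n}$: it equals $\max(1,p^{m-n})$. Moreover $\varphi(xy^{-1})$ is a constant multiple of $\overline{\pi(y)}=\pi^{-1}(y)$, and $\pi^{-1}$ also has rank $k$. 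By \eqref{Vlad-11.40} the integral of $\pi^{-1}$ over $S_{m-n}$ vanishes, so this case is done.

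\textbf{Step 2: the case $x\in S_n$.} Now $\varphi(xy^{-1})-\varphi(x)=(1-p^{-1})^{-1/2}\big(\pi(x)\pi^{-1}(y)\mathbf{1}_{S_0}(y)-\pi(x)\big)$. Hence
$$(W_0^\alpha\varphi)(x)=\varphi(x)\,c_\alpha\int_{\mathbb{Q}_p^\times}\frac{\pi^{-1}(y)\mathbf{1}_{S_0}(y)-1}{|1-y|_p^{\alpha+1}}\,dy .$$
We then split into the spheres $S_j$.

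For $j\ne0$, the integrand is $-|1-y|_p^{-\alpha-1}$. On the other hand $\pi^{-1}(y)-1$ integrates over $S_j$ to the same value, again by \eqref{Vlad-11.40}, since $|1-y|_p$ is constant on $S_j$.

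For $j=0$, the two integrands agree exactly.

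Therefore the bracket equals $K(\pi^{-1})$, and Lemma~\ref{pr1} applied to $\pi^{-1}$ (rank $k$) gives $K(\pi^{-1})=p^{\alpha k}$. This proves \eqref{eigen}.

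\textbf{Main obstacle.} The main delicate point is Step 2. There one must justify splitting the singular integral at $y=1$ into the $\pi^{-1}$-part and the constant part. This is handled as in Lemma~\ref{pr1}: the combined integrand is absolutely integrable by Lemma~\ref{lemma10.1}, and the comparison is made sphere by sphere on the combined difference, so no divergent pieces are separated. One must also verify that $\pi$ is well defined and of rank exactly $k$. This holds because $\pi|_{S_0}=\theta$, and the rank only depends on the restriction to $S_0$.
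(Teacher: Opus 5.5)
Your proof is correct and is essentially the paper's argument. It uses the same split according to whether $x\in S_n$, and the off-sphere contribution is killed by \eqref{Vlad-11.40} because $|1-y|_p$ is constant on $S_{m-n}$, $m\neq n$. The only difference is organizational: on $S_n$ you extend $\theta$ to the global character $\pi(x)=\theta(x|x|_p)$, reduce sphere by sphere to $K(\pi^{-1})$ using only convergent integrals, and cite Lemma~\ref{pr1}, whereas the paper recomputes that same sum inline from \eqref{Vlad-11.3}, \eqref{Vlad-11.19}, \eqref{Vlad-11.44}. One small correction to your last paragraph: the proof of Lemma~\ref{pr1} itself does split $S_0$ into individually divergent pieces and evaluates \eqref{Vlad-11.19}, \eqref{Vlad-11.44} at $-\alpha$ by analytic continuation; your version just confines that regularization to the cited lemma.
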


\begin{proof}
Let $\rank \theta=k\geq1$. Then for $N\in\mathbb{Z}$ and $\varphi\in\mathcal{L}$ we have
\begin{align*}
(W_0^{\alpha}\varphi)(x)&=
\frac{c_{\alpha}}{\sqrt{1-p^{-1}}}\int\limits_{\mathbb{Q}_p^{\times}}\frac{\theta(p^Nxy^{-1})\mathbf{1}_{S_N}(xy^{-1})-\theta(p^Nx)\mathbf{1}_{S_N}(x)}{|1-y|_p^{\alpha+1}}dy=\\
&=\frac{c_{\alpha}}{\sqrt{1-p^{-1}}}\bigg\{\int\limits_{\mathbb{Q}_p^{\times}}\frac{\theta(p^Nxy^{-1})\mathbf{1}_{S_N}(xy^{-1})}{|1-y|_p^{\alpha+1}}dy-\theta(p^Nx)\mathbf{1}_{S_N}\int\limits_{\mathbb{Q}_p^{\times}}\frac{dy}{|1-y|_p^{\alpha+1}}\bigg\}=\\
&=\frac{c_{\alpha}}{\sqrt{1-p^{-1}}}\{I_1-I_2\}.
\end{align*}

Let $|x|_p=p^n$, $n\in\mathbb{Z}$. We have $xy^{-1}\in S_N$ if and only if  $|y|_p=p^{-N}|x|_p$, that is $|y|_p=p^{-N+n}$. Hence,
$$I_1=\int\limits_{S_{-N+n}}\frac{\theta(p^Nxy^{-1})}{|1-y|_p^{\alpha+1}}dy.$$
When $N>n$, considering the formula \eqref{Vlad-11.40}, we obtain
\begin{align*}
I_1&=\int\limits_{S_{-N+n}}\frac{\theta(p^{n+N-n}xy^{-1})}{|1-y|_p^{\alpha+1}}dy=\theta(p^nx)\int\limits_{S_{-N+n}}\frac{\overline{\theta(y)}}{|1-y|_p^{\alpha+1}}dy=\\
&=\theta(p^nx)\int\limits_{S_{-N+n}}\overline{\theta(p^{-N+n}y)}dy=0.
\end{align*}
When $N=n$ it follows from \eqref{Vlad-11.44} that
$$I_1=\int\limits_{S_0}\frac{\theta(p^nxy^{-1})}{|1-y|_p^{\alpha+1}}dy=\theta(p^nx)\int\limits_{S_{-N+n}}\frac{\overline{\theta(y)}}{|1-y|_p^{\alpha+1}}dy=\theta(p^nx)\Gamma_p(-\alpha)p^{\alpha k}.$$
When $n<N$ we have from \eqref{Vlad-11.40},
$$I_1=\int\limits_{S_{-N+n}}\frac{\theta(p^{n+N-n}xy^{-1})}{|1-y|_p^{\alpha+1}}dy=\theta(p^nx)\int\limits_{S_{-N+n}}\frac{\overline{\theta({p^{-N+n}y})}}{|1-y|_p^{\alpha+1}}dy=0.$$
So,
$$I_1=\mathbf{1}_{S_N}(x)\theta(p^Nx)\Gamma_p(-\alpha).$$
It follows from \eqref{Vlad-11.3}, \eqref{Vlad-11.19} that
\begin{align*}
I_2&=\int\limits_{\mathbb{Q}_p^{\times}}\frac{dy}{|1-y|_p^{\alpha+1}}=\sum\limits_{n=-\infty}^{-1}\int\limits_{S_n}dy+\int\limits_{S_0}\frac{dy}{|1-y|_p^{\alpha+1}}dy+\sum\limits_{n=1}^{+\infty}\int\limits_{S_n}\frac{dy}{|y|_p^{\alpha+1}}dy=\\
&=\sum\limits_{n=-\infty}^{-1}(1-p^{-1})p^n+\frac{p-2+p^{\alpha}}{p(1-p^{\alpha})}+\sum\limits_{n=1}^{+\infty}(1-p^{-1})p^{-n\alpha}=\\
&=(1-p^{-1})\frac{p^{-1}}{1-p^{-1}}+\frac{p-2+p^{\alpha}}{p(1-p^{\alpha})}+(1-p^{-1})\frac{p^{-\alpha}}{1-p^{-\alpha}}.
\end{align*}
Therefore,
\begin{multline}
(W_0^{\alpha}\varphi)(x)=\frac{c_{\alpha}}{\sqrt{1-p^{-1}}}\mathbf{1}_{S_N}(x)\theta(p^{N}x)\\
\times \left\{\Gamma_p(-\alpha)+p^{-1}+\frac{p-2+p^{\alpha}}{p(1-p^{\alpha})}+(1-p^{-1})\frac{p^{-\alpha}}{1-p^{-\alpha}}\right\}=p^{\alpha k}\varphi(x).
\end{multline}
\end{proof}

\begin{remark}
The obtained system of eigenfunctions of the operator $W_0^{\alpha}$ does not form a basis in $L^2(\mathbb{Q}_p^{\times})$, since when $\theta=1$, we have $\rank\theta=0$, and the functions of the form $\varphi=\mathbf{1}_{S_n}$, $n\in\mathbb{Z}$, are not eigenfunctions of $W^{\alpha}$.
\end{remark}


\begin{remark}\Label{lemma3}\rm\
Note that the functions $\psi(x)=\pi(x)\mathbf{1}_{R_N}(x)$, $x\in\mathbb{Q}_p^{\times}$, $N\geq1$, where $\pi\in\widehat{\mathbb{Q}_p^{\times}}$ is a fixed character, $\rank\pi=k\geq1$, also form a set of eigenfunctions of the operator $W_0^{\alpha}$ with eigenvalues $p^{\alpha k}$. The proof is similar to Lemma \ref{lemma4}.
\end{remark}

\section{Properties of the multiplicative operator}\Label{Sec5}\
In this section we investigate the properties of the closure of the operator $W_0^{\alpha}$.

Consider the operator $W_0^{\alpha}$ as a linear unbounded operator in $L^2(\mathbb{Q}_p^{\times})$ with the domain $S^{\times}$,
$$W_0^{\alpha}\varphi=\mathfrak{M}^{-1}K\mathfrak{M}\varphi,\;\varphi\in S^{\times}.$$
For all $\varphi\in S^{\times}$ we have:
$$(W_0^{\alpha}\varphi,\psi)=(\varphi,\mathfrak{M}^{-1}\overline{K}\mathfrak{M}\psi),$$
so the condition
$$(W_0^{\alpha}\varphi,\psi)=(\varphi,\eta)$$
for $\eta\in L^2(\widehat{\mathbb{Q}_p^{\times}})$ is equivalent to the condition
$$(\varphi,\mathfrak{M}^{-1}\overline{K}\mathfrak{M}\psi)=(\varphi,\eta),$$
if $\mathfrak{M}^{-1}\bar{K}\mathfrak{M}\psi\in L^2(\mathbb{Q}_p^{\times})$, that is $\overline{K}\mathfrak{M}\psi\in L^2(\widehat{\mathbb{Q}_p^{\times}})$. Since by Lemma \ref{lemma2} $S^{\times}$ is dense in $L^2(\mathbb{Q}_p^{\times})$, it follows that $\eta=\mathfrak{M}^{-1}\bar{K}\mathfrak{M}\psi$, so we have the adjoint operator
$$(W_0^{\alpha})^{\ast}\psi=\mathfrak{M}^{-1}\overline{K}\mathfrak{M}\psi,$$
with the domain
$$D\left((W_0^{\alpha})^{\ast}\right)=\left\{\psi\in L^2(\mathbb{Q}_p^{\times})|\overline{K}\mathfrak{M}\psi\in L^2(\widehat{\mathbb{Q}_p^{\times}})\right\}.$$
Since $(W_0^{\alpha})^{\ast}$ is closed, the extension of the operator $W_0^{\alpha}$, which we will now denote as $W^{\alpha}$ with the domain
$$D(W^{\alpha})=\left\{\varphi\in L^2(\mathbb{Q}_p^{\times})|K\mathfrak{M}\varphi\in L^2(\widehat{\mathbb{Q}_p^{\times}})\right\}$$
is also a closed linear operator.

We will show that radial functions are in the domain of $W^{\alpha}$.

\begin{proposition}\Label{pr5.1}
Let $\varphi\in L^1(\mathbb{Q}_p^{\times})$ be a radial function, that is $\varphi(x)=\varphi(|x|_p)$, $x\in\mathbb{Q}_p^{\times}$. Then $\varphi\in D(W^{\alpha})$.
\end{proposition}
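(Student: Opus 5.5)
We need to show $K\mathfrak{M}\varphi\in L^2(\widehat{\mathbb{Q}_p^{\times}})$. Implicitly we also use $\varphi\in L^2$; the domain is defined as a subset of $L^2(\mathbb{Q}_p^{\times})$, so we take $\varphi\in L^1\cap L^2$ (or simply note that this is needed for membership at all).

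The first step is to compute the Mellin transform of a radial function. Write $\varphi(x)=\phi_n$ for $|x|_p=p^{n}$ and take $\pi=\theta|\cdot|_p^{i\beta}$. By \eqref{T-4.1},
\[
(\mathfrak{M}\varphi)(\pi)=\sum_{n}\phi_n\,p^{\pm i\beta n}\int_{S_n}\theta(p^{n}y)\,\frac{dy}{p^{n}} .
\]
For $\theta$ of positive rank each inner integral vanishes by \eqref{Vlad-11.40}. Hence $\mathfrak{M}\varphi$ is supported on the rank-zero component $\{\theta=1\}$. There it equals
\[
(1-p^{-1})\sum_n \phi_n p^{\pm i\beta n}=:g(\beta),
\]
which is a trigonometric series with $\sum_n|\phi_n|<\infty$, because $\varphi\in L^1$. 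So $g$ is continuous and bounded on $[-\pi/\ln p,\pi/\ln p]$. Also $\int|g|^2d\beta<\infty$ by Plancherel (Theorem \ref{pr-Kr}), or directly from boundedness on a compact interval.

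The second step uses Proposition \ref{pr2}: on the rank-zero component $K(\pi)=K_0(\beta)$. By \eqref{T-4.2},
\[
\|K\mathfrak{M}\varphi\|^2_{L^2(\widehat{\mathbb{Q}_p^{\times}})}=\frac1a\int_{-\pi/\ln p}^{\pi/\ln p}|K_0(\beta)|^2|g(\beta)|^2\,d\beta .
\]
So it suffices to show that $K_0$ is bounded on that interval. The denominators $1-p^{-i\beta-1}$ and $1-p^{i\beta-\alpha}$ have modulus at least $1-p^{-1}>0$ and $1-p^{-\alpha}>0$ respectively, since $\alpha>0$. Therefore $K_0$ is a continuous bounded function of $\beta$. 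Alternatively, Lemma \ref{lemma10.1} gives the direct bound $|K_0(\beta)|\le |c_\alpha|\cdot 2\bigl(p^{-1}+(1-p^{-1})p^{-\alpha}/(1-p^{-\alpha})\bigr)$, uniformly in $\beta$, because the $S_0$ term vanishes for rank zero.

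Combining the two steps, $K\mathfrak{M}\varphi$ is a bounded function supported on a set of finite measure, so it lies in $L^2$, and $\varphi\in D(W^\alpha)$. The only delicate point is the vanishing of $\mathfrak{M}\varphi$ at ramified characters. Without it we would need growth control of $K(\pi)=p^{\alpha k}$ (Lemma \ref{pr1}), which is unbounded. That vanishing is exactly \eqref{Vlad-11.40} applied sphere by sphere. Interchanging the sum and the integral there is justified by $\varphi\in L^1$.
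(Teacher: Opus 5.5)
Your proof is correct and follows the same route as the paper: the ramified part of $\mathfrak{M}\varphi$ vanishes sphere by sphere by \eqref{Vlad-11.40}, and on the rank-zero component the boundedness of $K_0$ together with $\sum_n|\varphi(p^n)|<\infty$ gives $K\mathfrak{M}\varphi\in L^2(\widehat{\mathbb{Q}_p^{\times}})$. Your bound $|g(\beta)|\le(1-p^{-1})\sum_n|\phi_n|$ is in fact the correctly squared form of the paper's estimate for $II$, and your extra hypothesis $\varphi\in L^2$ is automatic: each $S_n$ has $dx^{\times}$-measure $1-p^{-1}$, so $\|\varphi\|_{L^2}^2=(1-p^{-1})\sum_n|\phi_n|^2<\infty$ because $\ell^1\subset\ell^2$.
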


\begin{proof}
We have
$$\int\limits_{\mathbb{Q}_p^{\times}}|\varphi(|x|_p)|\frac{dx}{|x|_p}=\sum\limits_{n=-\infty}^{+\infty}\int\limits_{S_n}|\varphi(p^n)|\frac{dx}{p^n}=(1-p^{-1})\sum\limits_{n=-\infty}^{+\infty}|\varphi(p^n)|.$$
So the condition $\varphi\in L^1(\mathbb{Q}_p^{\times})$ is equivalent to the condition $\sum\limits_{n=-\infty}^{+\infty}|\varphi(p^n)|<\infty$.

It follows from \eqref{2.1}, \eqref{T-4.2} that
\begin{align*}
\int\limits_{\widehat{\mathbb{Q}_p^{\times}}}\abs*{K(\pi)(\mathfrak{M}\varphi)(\pi)}^2d\pi
&=\int\limits_{\widehat{\mathbb{Q}_p^{\times}}}\abs*{K(\pi)\int\limits_{\mathbb{Q}_p^{\times}}\varphi(x)\pi(x)\frac{dx}{|x|_p}}^2d\pi=\\
&=\int\limits_{\widehat{\mathbb{Q}_p^{\times}}}\abs*{K(\pi)\sum\limits_{n=-\infty}^{+\infty}\int\limits_{S_n}\varphi(x)\theta(x)p^{i\beta n}\frac{dx}{|x|_p}}^2d\pi=\\
&=\frac{1}{a}\sum\limits_{\theta\in\widehat{S_0}}\int\limits_{-\pi/\ln p}^{\pi/\ln p}\abs*{K(\theta|\cdot|_p^{i\beta})}^2\abs*{\sum\limits_{n=-\infty}^{+\infty}p^{i\beta n}\int\limits_{S_n}\varphi(x)\theta(x)\frac{dx}{|x|_p}}^2d\beta=\\
&=\frac{1}{a}\sum\limits_{\theta\in\widehat{S_0},\theta\neq1}\int\limits_{-\pi/\ln p}^{\pi/\ln p}p^{2\alpha\deg(\theta)}\abs*{\sum\limits_{n=-\infty}^{+\infty}p^{i\beta n}\varphi(p^n)\int\limits_{S_n}\theta(x)\frac{dx}{|x|_p}}^2d\beta+\\
&+\frac{1}{a}\int\limits_{-\pi/\ln p}^{\pi/\ln p}|K_0(\beta)|^2\abs*{\sum\limits_{n=-\infty}^{+\infty}p^{i\beta n}\varphi(p^n)\int\limits_{S_n}\frac{dx}{|x|_p}}^2d\beta=I+II.
\end{align*}
Since in the first sum $\rank\theta>0$ for all $\theta\in\widehat{S_0}$, $\theta\neq1$, it follows from \eqref{Vlad-11.40} that all the inner integrals are zero. Hence, $I=0$. We also have
\begin{align*}
II&\leq\frac{1-p^{-1}}{a}\int\limits_{-\pi/\ln p}^{\pi/\ln p}|K_0(\beta)|^2\sum\limits_{n=-\infty}^{+\infty}|p^{i\beta n}\varphi(p^n)|d\beta=\\
&=\frac{1-p^{-1}}{a}\int\limits_{-\pi/\ln p}^{\pi/\ln p}|K_0(\beta)|^2\sum\limits_{n=-\infty}^{+\infty}|\varphi(p^n)|d\beta.
\end{align*}
Since the sum $\sum\limits_{n=-\infty}^{+\infty}|\varphi(p^n)|$ is finite and the function $K_0$ is bounded on $[-\frac{\pi}{\ln p},\frac{\pi}{\ln p}]$, then $II<\infty$.
So, $K(\pi)\mathfrak{M}\varphi\in L^2(\widehat{\mathbb{Q}_p^{\times}})$.
\end{proof}


Note that the operator $W^{\alpha}$ is not symmetric, but  only dissipative due to the Theorem~\ref{dissip}. Thus, to construct the semigroup associated with the operator $(-W^{\alpha})$, we need to check its maximal dissipativity.
\begin{theorem}\Label{SemTt}\
The operator $(-W^\alpha)$ is the generator of a contraction $C_0$--semigroup $(T_t)_{t\ge0}$
on $L^2(\mathbb Q_p^\times)$.
\end{theorem}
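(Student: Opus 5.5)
The plan is to reduce everything to a multiplication operator via the Mellin transform and apply the Lumer--Phillips theorem, or equivalently to write the semigroup down explicitly. By Theorem~\ref{pr-Kr}, $\mathfrak{M}$ is a unitary operator from $L^2(\mathbb{Q}_p^\times)$ onto $L^2(\widehat{\mathbb{Q}_p^{\times}})$. Surjectivity comes from the Plancherel theorem on locally compact abelian groups (\cite[Theorem 31.5]{HR2}) rather than from isometry alone, and I would state this explicitly. The domain is $D(W^\alpha)=\{\varphi: K\mathfrak{M}\varphi\in L^2\}$, so $W^\alpha=\mathfrak{M}^{-1}M_K\mathfrak{M}$, where $M_K$ is the maximal multiplication operator by the measurable function $K(\pi)$ from \eqref{3.3}. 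The measurability of $K$ is clear from Lemma~\ref{pr1} and Proposition~\ref{pr2}: it equals $p^{\alpha\,\rank\theta}$ on the components with $\theta\ne 1$ and the continuous function $K_0(\beta)$ on the unramified component. Hence it suffices to prove that $-M_K$ generates a contraction $C_0$-semigroup on $L^2(\widehat{\mathbb{Q}_p^{\times}})$.

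The key input is \eqref{Re-1}, namely $\operatorname{Re}K(\pi)\ge 0$ for every $\pi$. I would define $(S_t g)(\pi)=e^{-tK(\pi)}g(\pi)$ and set $T_t=\mathfrak{M}^{-1}S_t\mathfrak{M}$. Since $|e^{-tK}|=e^{-t\operatorname{Re}K}\le 1$, each $S_t$ is a contraction. The semigroup law holds pointwise. Strong continuity follows from dominated convergence, because $|e^{-tK(\pi)}g(\pi)-g(\pi)|^2\le 4|g(\pi)|^2$ and the integrand tends to $0$ pointwise as $t\to0$.

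Next I would identify the generator. If $g\in D(M_K)$, then $t^{-1}(e^{-tK}-1)g\to -Kg$ in $L^2$ by dominated convergence, using $|t^{-1}(e^{-tz}-1)|\le |z|$ for $\operatorname{Re}z\ge0$. Conversely, if the limit $h$ exists in $L^2$, then a subsequence converges a.e., so $h=-Kg$ a.e. and therefore $Kg\in L^2$. Thus the generator is exactly $-M_K$ with domain $D(M_K)$, and conjugating by $\mathfrak{M}$ gives the generator $-W^\alpha$.

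As a cross-check, I would also verify maximal dissipativity directly. Dissipativity is Theorem~\ref{dissip}. For the range condition, take $\lambda>0$; then $|\lambda+K(\pi)|\ge\lambda+\operatorname{Re}K(\pi)\ge\lambda$, so $(\lambda+K)^{-1}$ is bounded and $\operatorname{Ran}(\lambda+W^\alpha)=L^2$. Lumer--Phillips then applies, with density of $D(W^\alpha)\supset S^\times$ provided by Lemma~\ref{lemma2}. The main obstacle is the careful identification of $D(W^\alpha)$ with the maximal domain of $M_K$. This needs unitarity of $\mathfrak{M}$ (so that the range condition transfers back) and a check that the domain described in Section~\ref{Sec5} is indeed this maximal domain. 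Everything else is routine.
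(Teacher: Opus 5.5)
Your proof is correct, but your main argument takes a different route from the paper's.

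The paper proves Theorem~\ref{SemTt} via Lumer--Phillips. It takes dissipativity from Theorem~\ref{dissip} and gets surjectivity of $\lambda I+W^\alpha$ by dividing $\mathfrak{M}g$ by $\lambda+K(\pi)$. Only afterwards, in Corollary~\ref{RepTt}, does it identify $T_t$ with $\mathfrak{M}^{-1}e^{-tK}\mathfrak{M}$.

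You instead construct $\mathfrak{M}^{-1}e^{-tK}\mathfrak{M}$ directly and compute its generator, including the exact domain. This proves the theorem and \eqref{Tt1} at the same time. It uses only $\operatorname{Re}K\ge 0$, measurability of $K$, and unitarity of $\mathfrak{M}$, so Theorem~\ref{dissip} and the abstract generation theorem become unnecessary.

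The paper's route is more modular: it needs only dissipativity plus a resolvent estimate, not an explicit diagonalization. Since the diagonalization is available here, your argument is the more economical and self-contained one.

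Your cross-check is essentially the paper's argument, with a cleaner resolvent bound. You use $|\lambda+K(\pi)|\ge \lambda+\operatorname{Re}K(\pi)\ge\lambda$, valid for every $\lambda>0$. The paper instead restricts to $\lambda>d_\alpha$, $\lambda\neq p^{\alpha k}$, and uses the lower bound $\min_k(|\lambda-d_\alpha|,|\lambda-p^{\alpha k}|)$. Those restrictions are superfluous, since $K(\pi)=p^{\alpha k}>0$ on ramified characters.

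You are also right that surjectivity of $\mathfrak{M}$ has to come from the full Plancherel theorem on $\mathbb{Q}_p^\times$. The isometry stated in Theorem~\ref{pr-Kr} does not give it. The paper invokes this surjectivity in its proof without deriving it.
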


\begin{proof}
By Theorem~\ref{dissip}, the operator $(-W^{\alpha})$ is dissipative. To prove maximal dissipativity, it suffices to show that
$$\Ran(\lambda I+W^{\alpha})=L^2(\mathbb{Q}_p^{\times})$$
for some $\lambda\geq0$, which will be chosen later.

Let $g\in L^2(\mathbb{Q}_p^{\times})$. Consider the equation
$$(\lambda I+W^{\alpha})f=g.$$
Applying the Mellin transform and using that
$$(\mathfrak{M}\,W^{\alpha}f)(\pi)=K(\pi)(\mathfrak{M}f)(\pi),\;\pi\in\mathbb{Q}_p^{\times},$$
we obtain
\begin{equation}\Label{A}
(\lambda+K(\pi))(\mathfrak{M}f)(\pi)=(\mathfrak{M})g(\pi).
\end{equation}

It follows from \eqref{3.6} that the function $\real K_0$ is continuous and hence reaches a maximum at $[-\frac{\pi}{\ln p},\frac{\pi}{\ln p}]$, which by \eqref{Re-1} is a positive constant:
$$\max\limits_{(-\frac{\pi}{\ln p},\frac{\pi}{\ln p}]}\real[K_0(\beta)]=d_{\alpha}>0.$$
Hence, for $\lambda>d_{\alpha}$, $\lambda\neq p^{\alpha k}$, $k\geq1$,
$$|\lambda+K(\pi)|\geq\min\limits_{k\geq1}(|\lambda-d_{\alpha}|,|\lambda-p^{\alpha k}|)\eqqcolon \delta>0.$$

Therefore from \eqref{A} we have
$$(\mathfrak{M}f)(\pi)=\frac{(\mathfrak{M}g)(\pi)}{\lambda+K(\pi)}$$
and $(\mathfrak{M}f)(\pi)$ is well defined for every character $\pi$. Moreover,
$$\abs*{\frac{1}{\lambda+K(\pi)}}\leq\frac{1}{\delta},$$
hence $\mathfrak{M}f\in L^2(\widehat{\mathbb{Q}_p^{\times}}).$ Indeed, for $g\in L^2(\widehat{\mathbb Q_p^\times})$,
$$\int\limits_{\widehat{\mathbb{Q}_p^{\times}}}\abs*{(\mathfrak{M}f)(\pi)}^2 d\pi=\int\limits_{\widehat{\mathbb{Q}_p^{\times}}}\abs*{\frac{(\mathfrak{M}g)(\pi)}
{\lambda+K(\pi)}}^2 d\pi\leq \dfrac{1}{\delta^2}\int\limits_{\widehat{\mathbb{Q}_p^{\times}}}\abs*{(\mathfrak{M}g)(\pi)}^2 d\pi,
$$
which is finite due to Theorem~\ref{pr-Kr}, we have that $\mathfrak{M}g\in L^2(\widehat{\mathbb{Q}_p^{\times}})$ for $g\in L^2(\widehat{\mathbb{Q}_p^{\times}})$.

Further, by the surjectivity of the Mellin transform, there exists a unique $f\in L^2(\mathbb Q_p^\times)$ such that
$$(\mathfrak{M}f)(\pi)=\frac{(\mathfrak{M}g)(\pi)}{\lambda+K(\pi)}.$$
Hence
$$(\lambda+K(\pi))(\mathfrak{M}f)(\pi)=(\mathfrak{M}g)(\pi),$$
or equivalently,
$$\mathfrak{M}\left[(\lambda I+W^\alpha)f\right]=\mathfrak{M}g.$$
Since the Mellin transform is injective, we conclude that
$$(\lambda I+W^\alpha)f=g.$$

Thus $\Ran(\lambda I+W^{\alpha})=L^2(\mathbb{Q}_p^{\times}),$ and $(-W^{\alpha})$ is maximally dissipative. Therefore Theorem~3.3 in \cite{G} now implies that $(-W^{\alpha})$ generates a contraction $(C_0)$--semigroup $(T_t)_{t\ge0}$ on $L^2(\mathbb{Q}_p^{\times})$.
\end{proof}

\begin{corollary}\Label{RepTt}\
The semigroup $(T_t)_{t\ge0}$ generated by $(-W^\alpha)$ satisfies
\begin{equation}\Label{Tt1}\
\mathfrak M(T_tf)(\pi)=e^{-tK(\pi)}\mathfrak M f(\pi),
\qquad
f\in L^2(Q_p^\times).
\end{equation}
\end{corollary}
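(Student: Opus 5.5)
The idea is to exploit the unitary equivalence, given by the Mellin transform, between $W^\alpha$ and multiplication by the symbol $K$. By Theorem \ref{pr-Kr}, $\mathfrak M$ is a unitary map from $L^2(\mathbb Q_p^\times)$ onto $L^2(\widehat{\mathbb Q_p^\times})$; it is surjective, as already used in the proof of Theorem \ref{SemTt}. Set $\widetilde W := \mathfrak M W^\alpha \mathfrak M^{-1}$. By the description of $D(W^\alpha)$ in Section \ref{Sec5}, $\widetilde W$ is exactly the maximal multiplication operator $M_K g = Kg$ with domain $\{g\in L^2(\widehat{\mathbb Q_p^\times}): Kg\in L^2\}$. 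Consequently the semigroup generated by $-W^\alpha$ is $\mathfrak M^{-1}\widetilde T_t\mathfrak M$, where $\widetilde T_t$ is the semigroup generated by $-M_K$. This is because conjugating a generator by a unitary conjugates its semigroup: if $S_t$ solves the abstract Cauchy problem for $-M_K$, then $\mathfrak M^{-1}S_t\mathfrak M$ solves it for $-W^\alpha$. Uniqueness of the $C_0$-semigroup attached to a given generator then gives $\widetilde T_t = \mathfrak M T_t \mathfrak M^{-1}$.

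It therefore suffices to show that $(e^{-tK(\pi)})_{t\ge0}$, acting by multiplication, is a contraction $C_0$-semigroup whose generator is exactly $-M_K$. This splits into three steps.

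\textbf{Contraction property.} By \eqref{Re-1}, $\operatorname{Re}K(\pi)\ge0$, so $|e^{-tK(\pi)}|\le1$. Each multiplication operator $S_t g = e^{-tK}g$ is therefore bounded with norm at most $1$, and the semigroup law $S_{t+s}=S_tS_s$ is immediate.

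\textbf{Strong continuity.} Since $e^{-tK(\pi)}\to1$ pointwise as $t\to0^+$ and $|e^{-tK}g-g|^2\le4|g|^2$, dominated convergence gives $S_tg\to g$ in $L^2$.

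\textbf{Identification of the generator.} This is the main point. Let $A$ denote the generator of $(S_t)$. First take $g\in D(M_K)$. Using $|e^{-tz}-1|\le t|z|$ for $\operatorname{Re}z\ge0$, we get
\[
\frac{e^{-tK}g-g}{t}\;\to\; -Kg \quad\text{pointwise},\qquad \Bigl|\frac{e^{-tK}g-g}{t}\Bigr|\le |K|\,|g|\in L^2,
\]
so dominated convergence yields $g\in D(A)$ and $Ag=-Kg$; that is, $-M_K\subset A$. Next, $-M_K$ is itself maximally dissipative: this is exactly what was proved in Theorem \ref{SemTt}, transported by $\mathfrak M$, since $\lambda+M_K$ is onto for suitable $\lambda>0$. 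A maximally dissipative operator admits no proper dissipative extension, and $A$ is dissipative as the generator of a contraction semigroup. Hence $A=-M_K$. Alternatively, one can argue directly: for $\lambda>0$ the resolvent of $A$ is $\int_0^\infty e^{-\lambda t}S_t\,dt$, which is multiplication by $(\lambda+K)^{-1}$, and its range is exactly $D(M_K)$.

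Combining these steps, $T_t=\mathfrak M^{-1}S_t\mathfrak M$, which is \eqref{Tt1} for every $f\in L^2(\mathbb Q_p^\times)$. The only delicate point is the generator identification together with the uniqueness of the semigroup for a given generator; both are standard once maximal dissipativity (Theorem \ref{SemTt}) is available.
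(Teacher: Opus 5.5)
Your proposal is correct and takes essentially the same route as the paper: the Mellin transform conjugates $W^\alpha$ to multiplication by $K$ on its maximal domain, and the semigroup generated by $-K$ is identified as multiplication by $e^{-tK}$. The paper only asserts this last identification. You actually verify it: contractivity from $\operatorname{Re}K\ge0$, strong continuity, and equality of the generator with $-M_K$ via maximal dissipativity or the resolvent. That fills in a step the paper omits rather than giving a different argument.
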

\begin{proof} Representation \eqref{Tt1} follows from \eqref{ghm} and unitarity of Mellin transform.
\end{proof}

\begin{proof}
By Theorem~\ref{th1}, under the multiplicative Mellin transform the operator $W^\alpha$ becomes the multiplication operator with symbol $K$.
Since the Mellin transform is unitary, the semigroup generated by $(-W^\alpha)$ is unitarily equivalent to the semigroup generated by the multiplication operator to the function
$(-K)$, which is the multiplication semigroup by $e^{-tK}$. This implies \eqref{Tt1}.
\end{proof}

\section{Fundamental solution of the Cauchy problem}
Let us investigate properties of the fundamental solution of the Cauchy problem
\begin{align}\Label{6.1}
\frac{\partial}{\partial t}u(t,x)+W^{\alpha}u(t,x)&=0,\;(t,x)\in \mathbb{R}_{+}\times\mathbb{Q}_p^{\times},\\
\Label{6.2}
u(0,x)&=u_0(x),
\end{align}
where the function $u_0\in D(W^{\alpha})$.

\begin{definition}
We say that the function $u:[0,+\infty)\times\mathbb{Q}_p^{\times}\to\mathbb{C}$ is a solution of the problem \eqref{6.1}--\eqref{6.2} if $u\in C\left([0,+\infty),D(W^{\alpha})\right)\cap C^1\left([0,+\infty),L^2(\mathbb{Q}_p^{\times})\right)$ and $u$ satisfies \eqref{6.1} for all $t\geq0$.
\end{definition}

Applying the Mellin transform to \eqref{6.1}, we get
$$\mathfrak{M}_{x\to\pi}\left[\frac{\partial}{\partial t}u(t,x)\right]+\MMxp[W^{\alpha}u(t,x)]=0.$$
Denote $\widehat{u}=\mathfrak{M}_{x\to\pi}u$, $\widehat{u_0}=\mathfrak{M}_{x\to\pi}u_0$. We have the equation
\begin{equation}\Label{6.3}
\frac{\partial}{\partial t}\widehat{u}(t,\pi)+K(\pi)\widehat{u}(t,\pi)=0,\;(t,\pi)\in \mathbb{R}_{+}\times\widehat{\mathbb{Q}_p^{\times}}.
\end{equation}
For any fixed $\pi$, the general solution to the ordinary differential equation \eqref{6.3} w.r.t. the variable $t$ is the function
$$\widehat{u}(t,\pi)=ce^{-tK(\pi)},\;(t,\pi)\in \mathbb{R}_{+}\times\widehat{\mathbb{Q}_p^{\times}}.$$
From the initial condition $\eqref{6.2}$ we get
$$\widehat{u}(t,\pi)=\widehat{u_0}(\pi)e^{-tK(\pi)},\;(t,\pi)\in \mathbb{R}_{+}\times\widehat{\mathbb{Q}_p^{\times}}.$$
Applying the inverse Mellin transform, we get
\begin{equation}\Label{6-sol}
u(t,x)=\mathfrak{M}_{\pi\to x}^{-1}\left[\widehat{u_0}(\pi)e^{-tK(\pi)}\right]=u_0(x)\ast\MMpx[e^{-tK(\pi)}](t,x),
\end{equation}
where $\ast$ denotes the multiplicative convolution defined in \eqref{conv}.

We will call the function
\begin{equation}\Label{6.4}
Z(t,x)=\mathfrak{M}_{\pi\to x}^{-1} [e^{-tK(\pi)}],\;(t,x)\in \mathbb{R}_{+}\times\mathbb{Q}_p^{\times},
\end{equation}
the \textit{fundamental solution} of the problem \eqref{6.1}-\eqref{6.2}.

\begin{theorem}\label{th6.1}
The fundamental solution $Z$ of the problem \eqref{6.1}-\eqref{6.2} satisfies the following estimate for all $t>0$ and $x\in\mathbb{Q}_p^{\times}$:
\begin{equation}\Label{est-1}
|Z(t,x)|\leq C\left(t^{-\frac{1}{\alpha}}+1\right),
\end{equation}
where $C>0$ is independent of $t$ and $x$.
Moreover, for $|x|_p\neq1$,
\begin{equation}\Label{est-2}
|Z(t,x)|\leq C\min\left\{1,\frac{t}{\abs*{\ln|x|_p}}\right\}.
\end{equation}
\end{theorem}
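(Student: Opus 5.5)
We split $Z(t,x)=\mathfrak{M}^{-1}_{\pi\to x}[e^{-tK(\pi)}]$ via \eqref{Mel-1} into the unramified part ($\theta=1$) and the ramified part ($\theta\neq 1$):
\[
Z(t,x)=\frac1a\int_{-\pi/\ln p}^{\pi/\ln p}e^{-tK_0(\beta)}|x|_p^{-i\beta}\,d\beta+\frac{2\pi}{a\ln p}\sum_{\theta\neq1}e^{-tp^{\alpha\,\rank\theta}}\,\overline{\theta(p^nx)}\,\delta_{n}(x),
\]
where we use Lemma~\ref{pr1}. Since $K(\theta|\cdot|^{i\beta})=p^{\alpha k}$ does not depend on $\beta$ when $\rank\theta=k\ge1$, the $\beta$-integral $\int|x|_p^{-i\beta}d\beta$ equals $2\pi/\ln p$ if $|x|_p=1$ and vanishes otherwise; here $|x|_p=p^{-n}$, and $\delta_n(x)$ denotes the indicator of $|x|_p=1$. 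Thus the ramified part lives only on $S_0$, and for $|x|_p\neq1$ only the unramified integral survives. Because $K_0$ is evaluated at a unitary character, Theorem~\ref{dissip} and \eqref{Re-1} give $\real K_0\ge0$, so $|e^{-tK_0}|\le1$.

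\textbf{Estimate \eqref{est-1}.} The unramified part is bounded by $\frac{2\pi}{a\ln p}$. For the ramified part we need the count of characters of $S_0$ of rank $k$. This count equals $|\widehat{S_0/A_k}|-|\widehat{S_0/A_{k-1}}|$, i.e.\ at most $(p-1)p^{k-1}\le p^k$. Hence the ramified part is bounded by
\[
C\sum_{k\ge1}p^{k}e^{-tp^{\alpha k}}.
\]
The key step is to bound this sum by $C(t^{-1/\alpha}+1)$. Compare it with the integral $\int_0^\infty p^{s}e^{-tp^{\alpha s}}ds$. The substitution $u=tp^{\alpha s}$ turns this into $\frac{t^{-1/\alpha}}{\alpha\ln p}\int_t^\infty u^{1/\alpha-1}e^{-u}du\le Ct^{-1/\alpha}\Gamma(1/\alpha)$. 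The function $s\mapsto p^se^{-tp^{\alpha s}}$ is unimodal with maximum at most $C t^{-1/\alpha}$, so the sum exceeds the integral by at most twice the maximum. Adding the unramified bound gives \eqref{est-1}.

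\textbf{Estimate \eqref{est-2}.} For $|x|_p=p^{-n}$ with $n\ne0$, only
\[
Z(t,x)=\frac1a\int_{-\pi/\ln p}^{\pi/\ln p}e^{-tK_0(\beta)}p^{in\beta}\,d\beta
\]
remains. The bound by $C$ is immediate from $|e^{-tK_0}|\le 1$. For the bound $Ct/|n|\sim Ct/|\ln|x|_p|$ we integrate by parts in $\beta$. The function $e^{-tK_0(\beta)}$ is smooth and $2\pi/\ln p$-periodic: by \eqref{3.6} it is a rational function of $p^{i\beta}$ whose denominators never vanish, since $1-p^{-i\beta-1}\ne0$ and $1-p^{i\beta-\alpha}\ne0$. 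The boundary terms therefore cancel, and the case $t=0$ gives zero because $\int p^{in\beta}d\beta=0$ for $n\ne0$. Writing $p^{in\beta}=\frac{1}{in\ln p}\frac{d}{d\beta}p^{in\beta}$, we get
\[
Z=-\frac{1}{a\,in\ln p}\int e^{-tK_0(\beta)}\bigl(-tK_0'(\beta)\bigr)p^{in\beta}\,d\beta,
\]
which in absolute value is at most $\frac{t}{|n|\ln p}\cdot\frac{2\pi}{a\ln p}\sup|K_0'|$. Since $\sup|K_0'|<\infty$ from the explicit formula and $|n|\ln p=|\ln|x|_p|$, this yields the $t/|\ln|x|_p|$ bound.

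The main obstacle is the ramified sum. It requires the exact count of characters of rank $k$, which the paper only asserts to be finite, so we would have to justify it from $S_0/A_k$ having order $(p-1)p^{k-1}$. It also requires the sum-to-integral comparison with uniform constants for all $t>0$, covering both small $t$ (where the $t^{-1/\alpha}$ term dominates) and large $t$ (where the sum decays exponentially and the constant term dominates).
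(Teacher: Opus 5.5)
Your proposal is correct and follows the paper's proof. It uses the same ramified/unramified split with the ramified part vanishing off $S_0$, the same comparison with $\int_0^\infty p^{s}e^{-tp^{\alpha s}}\,ds$ via $u=tp^{\alpha s}$, and the same integration by parts exploiting the $2\pi/\ln p$-periodicity of $K_0$ for \eqref{est-2}. The one difference is that you bound the ramified sum by the triangle inequality and the count $\le p^k$ of rank-$k$ characters, whereas the paper computes the exact character sums over $\mathfrak{S}_k$ by orthogonality. Your cruder bound suffices, since it reproduces the paper's worst case $p^nx=1$, and your unimodality argument makes the sum-to-integral step more explicit than the paper does.
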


\begin{proof}
Using representation \eqref{Mel-1} for inverse Mellin transfrom, we get for $|x|_p=p^n$, $n\in\mathbb{Z}$,
\begin{align*}
Z(t,x)&=\mathfrak{M}_{\pi\to x}^{-1}[e^{-tK(\pi)}] =\int\limits_{\widehat{\mathbb{Q}_p^{\times}}}e^{-tK(\pi)}\pi^{-1}(x)d\pi= \\
&=\frac{1}{a}\sum\limits_{\theta \in \widehat{S_0}}\overline{\theta(p^nx)}
\int\limits_{-\pi/\ln p}^{\pi/\ln p}\exp\big(-tK(\theta|\cdot|^{i\beta})\big)|x|_p^{-i\beta}d\beta=\\
&=\frac{1}{a}\sum\limits_{\theta\in\widehat{S_0},\theta\neq1}\overline{\theta(p^nx)}\int\limits_{-\pi/\ln p}^{\pi/\ln p}\exp\big({-tp^{\alpha\,\rank(\theta^{-1}|\cdot|^{-i\beta})}}\big)|x|_p^{-i\beta}d\beta+\\
&+\dfrac{1}{a}\int\limits_{-\pi/\ln p}^{\pi/\ln p}e^{-K_0(\beta)t}|x|_p^{-i\beta}d\beta=I_1+I_2.
\end{align*}

For $|x|_p=p^n$, $n\neq0$, we have
\begin{align*}
\int\limits_{-\pi/\ln p}^{\pi/\ln p}|x|_p^{-i\beta}d\beta&=\int\limits_{-\pi/\ln p}^{\pi/\ln p}e^{-i\beta\ln|x|_p}d\beta=\int\limits_{-\pi/\ln p}^{\pi/\ln p}\left[\cos(-\beta\ln|x|_p)+i\sin(-\beta\ln|x|_p)\right]d\beta=\\
&=\frac{2\sin(\frac{\pi}{\ln p}\ln|x|_p)}{\ln|x|_p}=\frac{2\sin(\pi n)}{n\ln p}=0,
\end{align*}
and for $n=0$, i.e. $|x|_p=1$,
$$\int\limits_{-\pi/\ln p}^{\pi/\ln p}|x|_p^{-i\beta}d\beta=\int\limits_{-\pi/\ln p}^{\pi/\ln p}d\beta=\frac{2\pi}{\ln p}.$$
Hence,
\begin{equation}\Label{idt}
I_1=0,\;|x|_p\neq1,
\end{equation}
and for $|x|_p=1$,
$$I_1=\frac{1}{a}\sum\limits_{\theta\in\widehat{S_0},\theta\neq1}\overline{\theta(p^nx)}e^{-\alpha
\,\rank\theta\, t}\int\limits_{-\pi/\ln p}^{\pi/\ln p}|x|_p^{-i\beta}d\beta
=\frac{2\pi}{a\ln p}
\sum\limits_{\theta\in\widehat{S_0},\theta\neq1}\overline{\theta(p^nx)}e^{-\alpha\,\rank\theta\, t}.$$

Since on the unit sphere there are exactly $p-2$ characters of rank $1$ and $(p-1)^2p^{k-2}$ characters of rank $k\geq2$ (see e.g.,\cite[Section IV.B]{R}), we will enumerate them by $\theta_{k,m}$, where $k=\rank\theta$. We have
$$
\sum\limits_{\theta\in\widehat{S_0}}\overline{\theta(p^nx)}e^{-tp^{\alpha\rank\theta}}=\sum\limits_{m=1}^{p-2}\overline{\theta_{1,m}(p^nx)}e^{-p^{\alpha} t}+\sum\limits_{k=2}^{+\infty}\sum\limits_{m=1}^{(p-1)^2p^{k-2}}\overline{\theta_{k,m}(p^nx)}e^{-p^{\alpha k}t}.$$

For a finite Abelian group $G$ there is a known formula \cite[Th. 4.1]{Co}
\begin{equation}\Label{Co1}
\sum\limits_{\chi\in\widehat{G}}\chi(g)=\begin{cases}
|G|,&\text{ if }g=1,\\
0,&\text{ if }g\neq1.
\end{cases}
\end{equation}

Denote for $k\geq0$: $\mathfrak{A}_k=\{\theta\in\widehat{S_0}|\deg\theta\leq k\}$, $\mathfrak{A}_0=\{1\}$ (since on $S_0$ there is exactly one character of rank $0$), $\mathfrak{S}_k=\{\theta\in\widehat{S_0}|\deg\theta= k\}$. Then $\mathfrak{A}_k$ is a finite subgroup in $\widehat{S_0}$. Using \eqref{Co1}, we get for $k\geq1$
$$\sum\limits_{\theta\in\mathfrak{A}_k}\theta(x)=\begin{cases}
|\mathfrak{A}_k|,&\text{ if }x\in A_k,\\
0,&\text{ if }x\notin A_k,
\end{cases}$$
and for $k=0$,
$$\sum\limits_{\theta\in\mathfrak{A}_0}\theta(x)=|\mathfrak{A}_0|=1.$$
Since $\mathfrak{S}_k=\mathfrak{A}_k\setminus\mathfrak{A}_{k-1}$, we have for $k\geq2$
$$\sum\limits_{\theta\in\mathfrak{S}_k}\theta(x)=\sum\limits_{\theta\in\mathfrak{A}_k}\theta(x)-\sum\limits_{\theta\in\mathfrak{A}_{k-1}}\theta(x)=\begin{cases}
0,&\text{ if }x\in S_0\setminus A_{k-1},\\
-|\mathfrak{A}_{k-1}|,&\text{ if }x\in A_{k-1}\setminus A_k,\\
|\mathfrak{A}_k|-|\mathfrak{A}_{k-1}|,&\text{ if }x\in A_k.
\end{cases}$$
Here
$$|\mathfrak{A}_k|-|\mathfrak{A}_{k-1}|=|\mathfrak{S}_k|=\begin{cases}
p-2,\;&k=1\\
(p-1)^2p^{k-2},\;&k\geq2.
\end{cases}$$
When $k\geq3$,
\begin{align*}
|\mathfrak{A}_{k-1}|&=1+p-2+\sum\limits_{m=2}^{k-1}(p-1)^2p^{m-2}=p-1+(p-1)^2p^{-2}\sum\limits_{m=2}^{k-1}p^m=\\
&=p-1+(p-1)^2p^{-2}\frac{p^2(1-p^{k-2})}{1-p}=\\
&=p-1-(p-1)(1-p^{k-2})=(p-1)p^{k-2}.
\end{align*}
So we have for $k\geq3$
$$\sum\limits_{\theta\in\mathfrak{S}_k}\theta(x)=\begin{cases}
0,&\text{ if }x\in S_0\setminus A_{k-1},\\
-(p-1)p^{k-1},&\text{ if } x\in A_{k-1}\setminus A_k,\\
(p-1)^2p^{k-2},&\text{ if }x\in A_k,
\end{cases}$$
For $k=2$,
$$\sum\limits_{\theta\in\mathfrak{S}_2}\theta(x)=\begin{cases}
0,&\text{ if }x\in S_0\setminus A_{1},\\
-(p-1),&\text{ if } x\in A_1\setminus A_2,\\
(p-1)^2,&\text{ if }x\in A_2,
\end{cases}$$
and for $k=1$,
$$\sum\limits_{\theta\in\mathfrak{S}_1}\theta(x)=\begin{cases}
-1,&\text{ if }x\in S_0\setminus A_{1},\\
p-2,&\text{ if }x\in A_1.
\end{cases}$$

Denote $V_k=A_k\setminus A_{k+1}$. Then for $p^nx$ the following cases are possible: either there exists such $k_x\geq0$ that $p^nx\in V_{k_x}$ (and then this $k_x$ is unique), or $p^nx\in A_k$ for all $k\geq0$, which means $|1-p^nx|_p\leq p^{-k}$ for all $k\geq0$, so $p^nx=1$.

Let $p^nx\in V_0$. We have
$$\sum\limits_{\theta\in\widehat{S_0}}\overline{\theta(p^nx)}e^{-tp^{\alpha\rank\theta}}=-e^{-\lambda_1t}.$$
If $p^nx\in V_1$, then
$$\sum\limits_{\theta\in\widehat{S_0}}\overline{\theta(p^nx)}e^{-tp^{\alpha\rank\theta}}=(p-2)e^{-\lambda_1t}-(p-1)e^{-\lambda_2t}.$$
If $p^nx\in V_2$, then
$$\sum\limits_{\theta\in\widehat{S_0}}\overline{\theta(p^nx)}e^{-tp^{\alpha\rank\theta}}=(p-2)e^{-\lambda_1t}+(p-1)^2e^{-\lambda_2t}-(p-1)p^2e^{-\lambda_3t}.$$
If $p^nx\in V_{k_x}$, $k_0\geq4$, then
\begin{align*}
\sum\limits_{\theta\in\widehat{S_0}}\overline{\theta(p^nx)}e^{-tp^{\alpha\rank\theta}}&=(p-2)e^{-\lambda_1t}+(p-1)^2e^{-\lambda_2t}+\\
&+\sum\limits_{k=3}^{k_x}(p-1)^2p^{k-2}e^{-\lambda_kt}-(p-1)p^{k_x-1}e^{-\lambda_{k_x+1}t}.
\end{align*}
If $p^nx=1$, then
\begin{align*}
\sum\limits_{\theta\in\widehat{S_0}}\overline{\theta(p^nx)}e^{-tp^{\alpha\rank\theta}}&=(p-2)e^{-\lambda_1t}+\sum\limits_{k=2}^{+\infty}(p-1)^2p^{k-2}e^{-\lambda_kt}=\\
&=(p-2)e^{-\lambda_1t}+(p-1)^2p^{-2}\sum\limits_{k=2}^{+\infty}p^{k}e^{-\lambda_kt}.
\end{align*}

Consider
$$\int\limits_{0}^{+\infty}e^{-p^{\alpha x}t}p^xdx.$$
After substitution $u=tp^{\alpha x}$, $du=\alpha t p^{\alpha x}\ln p dx$ we get
$$\int\limits_{0}^{+\infty}e^{-p^{\alpha x}t}p^xdx=\frac{1}{\alpha\ln p}\int\limits_t^{+\infty}e^{-u}\left(\frac{u}{t}\right)^{\frac{1}{\alpha}}\frac{1}{u}du=\frac{1}{\alpha\ln p t^{\frac{1}{\alpha}}}\int\limits_t^{+\infty}e^{-u}u^{\frac{1}{\alpha}-1}du.$$
Hence,
$$|I_1(t,x)|\leq C\left(\sum\limits_{k=1}^{k_x+1}e^{-\lambda_kt}+\frac{1}{\alpha\ln p t^{\frac{1}{\alpha}}}\right).$$
Or, since
$$\int\limits_t^{+\infty}e^{-u}u^{\frac{1}{\alpha}-1}du\leq\int\limits_0^{+\infty}e^{-u}u^{\frac{1}{\alpha}-1}du=\Gamma\left(\frac{1}{\alpha}\right),$$
where $\Gamma$ denotes the gamma function, we get a uniform estimate for all $x\in\mathbb{Q}_p^{\times}$ and $t>0$:
$$|I_1(t,x)|\leq C t^{-\frac{1}{\alpha}}.$$

For the term $I_2$ we have
$$I_2(t,x)=\int\limits_{-\pi/\ln p}^{\pi/\ln p}e^{-K_0(\beta)t}|x|_p^{-i\beta}d\beta=\int\limits_{-\pi/\ln p}^{\pi/\ln p}e^{-\real(K_0(\beta))}e^{-i\im(K_0(\beta))}|x|_p^{-i\beta}d\beta.$$

It follows from \eqref{Re-1} that
$$\real[K_0(\beta)]\geq0,\mbox{ for all }\beta\in\left(-\frac{\pi}{\ln p},\frac{\pi}{\ln p}\right).$$

Since $\abs*{|x|_p^{-i\beta}}=1$, we get the estimate
$$|I_2(t,x)|\leq\int\limits_{-\pi/\ln p}^{\pi/\ln p}|x|_p^{-i\beta}d\beta=\frac{2\pi}{\ln p}.$$
Finally, we have for all $t>0$,
\begin{equation}\Label{gtidt}
|Z(t,x)|\leq C(t^{-1/\alpha}+1).
\end{equation}

Let us consider the dependence on $x$. Suppose that $|x|_p\neq1$, so that $n\neq0$. By \eqref{idt},
$$Z(t,x)=I_2(t,x).$$ 
Put 
$$f_t(\beta)=e^{-tK_0(\beta)}.$$
Since for $K_0(\be)$we have $K_0\left(\frac{\pi}{\ln p}\right)=K_0\left(-\frac{\pi}{\ln p}\right)$, integration by parts gives $$I_2(t,x)=\frac{1}{a}\int\limits_{-\pi/\ln p}^{\pi/\ln p}f_t(\beta)e^{-in\beta\ln p}d\beta=\frac{1}{ian\ln p}\int\limits_{-\pi/\ln p}^{\pi/\ln p}f_t'(\beta)e^{-in\beta\ln p}d\beta.$$
Due to
$$f_t'(\beta)=-tK_0'(\beta)e^{-tK_0(\beta)},$$
and
$$\abs*{e^{-tK_0(\beta)}}\leq1,$$
we obtain
\begin{equation}\Label{gdhm}\ 
|I_2(t,x)|\leq\frac{t}{a|n|\ln p}\int\limits_{-\pi/\ln p}^{\pi/\ln p}|K_0'(\beta)|d\beta\leq\frac{Ct}{|n|}\leq \frac{Ct}{\abs*{\ln|x|_p}},\;|x|_p\neq1,
\end{equation}
since $|n|=\frac{\abs*{\ln|x|_p}}{\ln p}.$

Finally, for $|x|_p\neq1$, we have $I_1(t,x)=0$ by \eqref{idt}. Hence
\[
Z(t,x)=I_2(t,x),
\qquad |x|_p\neq1.
\]
On the other hand, the representation of $I_2$ immediately gives
\[
|I_2(t,x)|
\leq
\frac{1}{a}
\int_{-\pi/\ln p}^{\pi/\ln p}
e^{-t\operatorname{Re}K_0(\beta)}\,d\beta
\leq C,
\qquad |x|_p\neq1,
\]
where we have used $\operatorname{Re}K_0(\beta)\geq0$.
Together with \eqref{gdhm}, this yields
\[
|Z(t,x)|
\leq
C\min\left\{
1,\,
\frac{t}{|\ln |x|_p|}
\right\},
\qquad |x|_p\neq1.
\]

\end{proof}

\begin{proposition}\Label{pr6.2}
The fundamental solution satisfies
$Z(t,\cdot)\in L^2(\mathbb{Q}_p^{\times})$ for all fixed $t>0$.
\end{proposition}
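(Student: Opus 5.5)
The plan is to work on the Mellin side and use the Plancherel identity of Theorem~\ref{pr-Kr}. By definition \eqref{6.4}, $Z(t,\cdot)=\mathfrak{M}^{-1}_{\pi\to x}[e^{-tK(\pi)}]$. It therefore suffices to show that $e^{-tK(\cdot)}\in L^2(\widehat{\mathbb{Q}_p^{\times}})$ for each fixed $t>0$, and then to check that the pointwise formula \eqref{Mel-1} agrees with the $L^2$ inverse transform. The $L^2$ norm then follows from Plancherel:
\[
\|Z(t,\cdot)\|^2_{L^2(\mathbb{Q}_p^{\times})}=\int_{\widehat{\mathbb{Q}_p^{\times}}}e^{-2t\operatorname{Re}K(\pi)}\,d\pi .
\]

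To compute this integral, I use the decomposition \eqref{T-4.2}, $\pi=\theta|\cdot|_p^{i\beta}$, and split the sum over $\theta\in\widehat{S_0}$ into $\theta=1$ and $\theta\neq1$.

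For $\theta=1$, Proposition~\ref{pr2} gives $K(\pi)=K_0(\beta)$, and \eqref{Re-1} gives $\operatorname{Re}K_0\ge0$. The contribution is therefore at most $\frac1a\cdot\frac{2\pi}{\ln p}$.

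For $\theta\neq1$ of rank $k\ge1$, the character $\pi$ also has rank $k$, since $|\cdot|_p^{i\beta}$ is trivial on $S_0$. Lemma~\ref{pr1} then gives $K(\pi)=p^{\alpha k}$, independent of $\beta$. The contribution is
\[
\frac{2\pi}{a\ln p}\sum_{k\ge1}|\mathfrak S_k|\,e^{-2tp^{\alpha k}},
\]
where $|\mathfrak S_1|=p-2$ and $|\mathfrak S_k|=(p-1)^2p^{k-2}$ for $k\ge2$, as in the proof of Theorem~\ref{th6.1}.

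The only real point is convergence of this series for fixed $t>0$. The terms behave like $p^{k}e^{-2tp^{\alpha k}}$, and the exponential decay in $p^{\alpha k}$ dominates the geometric growth $p^k$. Concretely, $e^{-s}\le C_m s^{-m}$ with $m>1/\alpha$ bounds the sum by
\[
C\,t^{-m}\sum_k p^{k(1-\alpha m)}<\infty .
\]
Comparing with the integral $\int_0^\infty e^{-2tp^{\alpha x}}p^x\,dx$ computed in the proof of Theorem~\ref{th6.1} gives the sharper bound $O(t^{-1/\alpha})+O(1)$. Hence
\[
\|Z(t,\cdot)\|^2\le C\bigl(1+t^{-1/\alpha}\bigr)<\infty .
\]

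Finally, I will note that $e^{-tK}$ also lies in $L^1(\widehat{\mathbb{Q}_p^{\times}})$, by the same computation without the factor $2$. So the inverse transform \eqref{2.2} is defined pointwise, and it coincides with the $L^2$-extension of $\mathfrak{M}^{-1}$ from Theorem~\ref{pr-Kr}: for functions in $L^1\cap L^2$ the two agree almost everywhere. I regard this identification as the main subtlety, though it is standard from the Fourier theory on locally compact abelian groups cited before Theorem~\ref{pr-Kr}.
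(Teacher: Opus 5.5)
Your proposal is correct, but it takes a different route from the paper.

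\textbf{The paper's route.} The paper never returns to the dual side. It integrates the pointwise heat-kernel bounds of Theorem~\ref{th6.1} over the spheres $S_n$. Every sphere has the same multiplicative measure $1-p^{-1}$. So the bound $|Z(t,x)|\le Ct/|n|$ for $|x|_p=p^n$, $n\neq0$, produces the summable series $\sum_{n\neq0}n^{-2}$. The unit sphere is handled by $|Z(t,x)|\le C(t^{-1/\alpha}+1)$.

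\textbf{Your route.} You compute $\|Z(t,\cdot)\|^2=\int e^{-2t\operatorname{Re}K(\pi)}\,d\pi$ exactly via Plancherel. After that you only need three inputs: $\operatorname{Re}K_0\ge0$, Lemma~\ref{pr1}, and the count $|\mathfrak S_k|$.

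\textbf{What your route buys.}
\begin{enumerate}
\item It is more self-contained. It bypasses the integration by parts in $\beta$ and the character-sum analysis behind Theorem~\ref{th6.1}.
\item It gives the sharper bound $\|Z(t,\cdot)\|^2\le C(1+t^{-1/\alpha})$. The paper's argument yields only $C\bigl(t^2+(t^{-1/\alpha}+1)^2\bigr)$, which is worse both as $t\to0$ and as $t\to\infty$.
\item It checks $e^{-tK}\in L^1(\widehat{\mathbb{Q}_p^{\times}})$. This is what makes the pointwise definition \eqref{6.4} meaningful and identifies it with the $L^2$ inverse transform; the paper relies on it only implicitly.
\end{enumerate}

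\textbf{What the paper's route buys.} It provides pointwise decay information in $x$, which a Plancherel computation cannot see. For this proposition, however, that extra information is not needed.
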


\begin{proof}
It follows from Theorem \ref{th6.1} that for all $t>0$,
\begin{align*}
\int\limits_{\mathbb{Q}_p^{\times}}|Z(t,x)|^2\frac{dx}{|x|_p}&\leq C^2\bigg\{\sum\limits_{n=-\infty}^{-1}\int\limits_{S_n}\frac{t^2}{\abs*{\ln|x|_p}^2}\frac{dx}{|x|_p}+\\
&+\int\limits_{S_0}\abs*{t^{-1/\alpha}+1}^2\frac{dx}{|x|_p}
+\sum\limits_{n=1}^{+\infty}\int\limits_{S_n}\frac{t^2}{\abs*{\ln|x|_p}^2}\frac{dx}{|x|_p}\bigg\}.
\end{align*}
For $n<0$ we have
$$\int\limits_{S_n}\frac{1}{\ln^2|x|_p}\frac{dx}{|x|_p}=\frac{1}{\ln^2p}\int\limits_{S_n}\frac{1}{n^2}\frac{dx}{p^n}=\frac{1-p^{-1}}{n^2\ln^2p},$$
so that
$$\sum\limits_{n=-\infty}^{-1}\int\limits_{S_n}\frac{t^2}{\abs*{\ln|x|_p}^2}\frac{dx}{|x|_p}\leq\frac{1-p^{-1}}{n^2\ln^2p}\sum\limits_{n=1}^{\infty}\frac{1}{n^2}<\infty.$$
Similarly,
$$\sum\limits_{n=1}^{+\infty}\int\limits_{S_n}\frac{t^2}{\abs*{\ln|x|_p}^2}\frac{dx}{|x|_p}\leq\frac{1-p^{-1}}{n^2\ln^2p}\sum\limits_{n=1}^{\infty}\frac{1}{n^2}<\infty.$$
We have
$$\int\limits_{S_0}\frac{dx}{|x|_p}=1-p^{-1}.$$
This concludes the proof.
\end{proof}

\begin{proposition}\Label{pr6.3}
The function $u$ defined in \eqref{6-sol} is the solution of the problem \eqref{6.1}--\eqref{6.2}.
\end{proposition}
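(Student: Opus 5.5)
We will work entirely on the Mellin side and use the semigroup structure established in Section~\ref{Sec5}. By \eqref{6-sol}, $\mathfrak{M}u(t,\cdot)(\pi)=e^{-tK(\pi)}\widehat{u_0}(\pi)$, and by Corollary~\ref{RepTt} this is exactly $\mathfrak{M}(T_tu_0)(\pi)$. Since $\mathfrak{M}$ is an isometry (Theorem~\ref{pr-Kr}) and injective, we get $u(t,\cdot)=T_tu_0$ in $L^2(\mathbb{Q}_p^\times)$ for every $t\ge0$. The convolution form $u_0\ast Z(t,\cdot)$ also needs to agree with this. Here $Z(t,\cdot)\in L^2$ by Proposition~\ref{pr6.2}, and the Mellin transform of a multiplicative convolution is the product of transforms. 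We therefore only need to identify $\mathfrak{M}Z(t,\cdot)=e^{-tK}$, which follows from \eqref{6.4} and the $L^2$ inversion.

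Once $u(t)=T_tu_0$ with $u_0\in D(W^\alpha)$, the standard theory of $C_0$-semigroups (Theorem~\ref{SemTt}) gives several facts. First, $T_tu_0\in D(W^\alpha)$. Second, $t\mapsto T_tu_0$ is continuously differentiable in $L^2$. Third, $\frac{d}{dt}T_tu_0=-W^\alpha T_tu_0=-T_tW^\alpha u_0$. Consequently $u\in C^1([0,\infty),L^2)$ and \eqref{6.1} holds for all $t\ge0$, with one-sided derivative at $t=0$; in addition $u(0)=u_0$. Continuity into $D(W^\alpha)$ with the graph norm follows because $W^\alpha u(t)=T_tW^\alpha u_0$ is continuous in $t$ by strong continuity of $T_t$.

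As a self-contained check, one can also verify these directly via dominated convergence on the Mellin side. The quantity $\|K\,\mathfrak{M}(u(t)-u(s))\|^2=\int |K|^2|e^{-tK}-e^{-sK}|^2|\widehat{u_0}|^2\,d\pi$ tends to $0$ because $|e^{-tK}|\le1$ by \eqref{Re-1}, and $K\widehat{u_0}\in L^2$. For the derivative, the difference quotient $h^{-1}(e^{-(t+h)K}-e^{-tK})$ is dominated by $|K|$, using $|e^{-hK}-1|\le h|K|$ for $\real K\ge0$, and it converges pointwise to $-Ke^{-tK}$.

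The main obstacle is the bookkeeping identifying the convolution expression in \eqref{6-sol} with $T_tu_0$. The inverse transform \eqref{6.4} of $e^{-tK}$ is only an $L^2$ (not $L^1$) object, because $e^{-tK}$ is not integrable over $\widehat{\mathbb{Q}_p^\times}$ in general. It is integrable for $t>0$ thanks to the rapid decay $e^{-tp^{\alpha k}}$ against the $\sim p^k$ characters of rank $k$. I would first try to use this integrability to justify the convolution theorem pointwise. Failing that, I would simply take \eqref{6-sol} as defined by its Mellin transform, which is its primary definition in the paper.
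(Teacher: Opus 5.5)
Your proposal is correct and takes the same route as the paper, whose proof only says that the claim follows from the properties of $(C_0)$-semigroups. You identify $u(t)=T_tu_0$ through Corollary~\ref{RepTt} and then apply the standard facts for initial data in the generator's domain; your Mellin-side dominated-convergence check is a valid, semigroup-free supplement. The only blemish is the self-contradictory remark about integrability of $e^{-tK}$ in your last paragraph: it is integrable for every $t>0$, and in any case the pointwise identity $u_0\ast Z(t,\cdot)=\mathfrak{M}^{-1}[\widehat{u_0}e^{-tK}]$ already follows from Parseval, since both $u_0$ and $Z(t,\cdot)$ lie in $L^2$.
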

\begin{proof}
Follows from the properties of $(C_0)$--semigroups.
\end{proof}

\section{Convolution semigroup associated with $W^\alpha$}

In this section we construct the convolution semigroup associated with the operator $W^\alpha$ and show its connections with the semigroup $T_t$ constructed in Section~\ref{Sec5}.

First we remind some standard definition (see e.g. \cite{B-F}):
\begin{definition}\Label{Def9-1}\
Let $G$ be a locally compact Abelian group and let
$\psi:G\to\mathbb C$. The function $\psi$ is called {\it negative definite} if
\[\sum_{j,k=1}^{n} \left(\psi(x_j)+\overline{\psi(x_k)}
-\psi(x_j-x_k)\right)c_j\overline{c_k}\ge 0\]
for every choice of points $x_1,\ldots,x_n\in G$ and complex numbers $c_1,\ldots,c_n\in\mathbb C.$
\end{definition}
Due to  \cite[Proposition 7.5]{B-F} this definition is equivalent to the following:

\begin{definition}
A function $\psi:G\to\mathbb C$ with $\psi(e)=0$
is negative definite if and only if
\[\sum_{j,k=1}^{n} \psi(x_j^{-1}x_k) c_j\overline{c_k}
\le 0 \]
for all $x_1,\ldots,x_n\in G$ and $c_1,\ldots,c_n\in\mathbb C$ satisfying $ \sum\limits_{j=1}^{n} c_j=0. $
\end{definition}

\begin{theorem}[Negative definiteness of the symbol]\Label{th-neg}
Let $\pi\in\widehat{\mathbb Q_p^\times},$
where $K(\pi)$ is given by \eqref{3.3}.
\[K(\pi)
=
c_\alpha
\int_{\mathbb Q_p^\times}
\frac{\pi(y)-1}{|1-y|_p^{\alpha+1}} \,d y,
\qquad 0<\alpha<1.\]
Then $K$ is a negative definite function on
$\widehat{\mathbb Q_p^\times}$.
\end{theorem}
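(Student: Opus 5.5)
The plan is to write $K$ as a superposition, with respect to a positive measure, of the elementary functions $\pi\mapsto 1-\pi(y)$, $y\in\mathbb Q_p^\times$ fixed, and to check that each of these is negative definite on $\widehat{\mathbb Q_p^\times}$. Since $\alpha>0$ we have $p^{\alpha}>1$, so $c_\alpha=\frac{1-p^{\alpha}}{1-p^{-\alpha-1}}<0$. We may therefore write
\[
K(\pi)=\int_{\mathbb Q_p^\times}\bigl(1-\pi(y)\bigr)\,d\mu(y),\qquad d\mu(y)=\frac{|c_\alpha|}{|1-y|_p^{\alpha+1}}\,dy,
\]
where $\mu$ is a positive (infinite) measure. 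By Lemma \ref{lemma10.1} this integral converges absolutely for every $\pi$. We use the first form of Definition \ref{Def9-1}, with the group operation of $\widehat{\mathbb Q_p^\times}$ written multiplicatively, so that $x_j-x_k$ becomes $\pi_j\pi_k^{-1}$.

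\textbf{Step 1 (elementary functions).} Fix $y$ and set $\psi_y(\pi)=1-\pi(y)$. Since characters are unitary, $\overline{\pi_k(y)}=\pi_k^{-1}(y)$, and hence
\[
\psi_y(\pi_j)+\overline{\psi_y(\pi_k)}-\psi_y(\pi_j\pi_k^{-1})
=1-\pi_j(y)-\overline{\pi_k(y)}+\pi_j(y)\overline{\pi_k(y)}
=\bigl(1-\pi_j(y)\bigr)\overline{\bigl(1-\pi_k(y)\bigr)}.
\]
Consequently, for any $\pi_1,\dots,\pi_n$ and $c_1,\dots,c_n\in\mathbb C$,
\[
\sum_{j,k}\bigl(\psi_y(\pi_j)+\overline{\psi_y(\pi_k)}-\psi_y(\pi_j\pi_k^{-1})\bigr)c_j\overline{c_k}=\Bigl|\sum_j c_j\bigl(1-\pi_j(y)\bigr)\Bigr|^2\ge0 .
\]

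\textbf{Step 2 (integration).} The left-hand side of Definition \ref{Def9-1} for $K$ is a finite linear combination of the three values $K(\pi_j)$, $\overline{K(\pi_k)}$ and $K(\pi_j\pi_k^{-1})$. Each of these is an absolutely convergent integral against $\mu$ by Lemma \ref{lemma10.1}. The sum can therefore be taken inside the integral, and the conjugation commutes with integration because $\mu$ is real. This gives
\[
\sum_{j,k}\bigl(K(\pi_j)+\overline{K(\pi_k)}-K(\pi_j\pi_k^{-1})\bigr)c_j\overline{c_k}
=\int_{\mathbb Q_p^\times}\Bigl|\sum_j c_j\bigl(1-\pi_j(y)\bigr)\Bigr|^2 d\mu(y)\ge0,
\]
which is exactly negative definiteness. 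We also note that $K(1)=0$, consistent with the second formulation.

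\textbf{Main obstacle.} The only delicate point is the justification in Step 2: the integrand in the definition of $K$ is singular at $y=1$, and $\mu$ is not finite there. This is settled by Lemma \ref{lemma10.1}, which gives absolute integrability of $|1-\pi(y)|$ against $\mu$ for every fixed character. Since only finitely many characters $\pi_j$, $\pi_j\pi_k^{-1}$ occur, linearity of the integral suffices and no limiting argument is needed. The restriction $0<\alpha<1$ is not used; the argument needs only $\alpha>0$, i.e. $c_\alpha<0$.
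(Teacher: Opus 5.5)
Your proof is correct. It reaches the same key identity as the paper, but it verifies a different formulation of negative definiteness.

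The paper uses the second (conditional) formulation. It notes $K(e)=0$ and restricts to $c_j$ with $\sum_j c_j=0$. It then interchanges the finite sum with the absolutely convergent integrals and uses $\sum_{j,k}c_j\overline{c_k}=|\sum_j c_j|^2=0$ to drop the constant term. This gives $\sum_{j,k}K(\pi_j\pi_k^{-1})c_j\overline{c_k}=c_\alpha\int|\sum_j c_j\pi_j(y)|^2\,|1-y|_p^{-\alpha-1}\,dy\le 0$.

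You instead check the original Berg--Forst definition directly. Your tool is the pointwise factorization $(1-\pi_j(y))\overline{(1-\pi_k(y))}$, which exhibits $K$ as a positive mixture of the elementary negative definite functions $\pi\mapsto 1-\pi(y)$. This amounts to a L\'evy--Khintchine-type representation with L\'evy measure $\mu$.

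Your route has two advantages:
\begin{enumerate}
\item You do not need the equivalence between the two formulations. In its correct form that equivalence also requires the Hermitian condition $K(\pi^{-1})=\overline{K(\pi)}$. This condition holds here, since $c_\alpha$ and the kernel are real, but the paper's restatement leaves it implicit.
\item The integrability of your final integrand $|\sum_j c_j(1-\pi_j(y))|^2$ against $\mu$ is visible on its face. In the paper's form it depends on $\sum_j c_j=0$, which forces $\sum_j c_j\pi_j(y)$ to vanish on a neighbourhood of $y=1$.
\end{enumerate}

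The paper's route is marginally shorter once the equivalence is granted. Both arguments use only $c_\alpha<0$ and Lemma~\ref{lemma10.1}. So your remark that the restriction $0<\alpha<1$ is superfluous applies equally to the paper's proof.
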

\begin{proof} 1) Let $e$ denote the trivial character of $\mathbb Q_p^\times$. Then $K(e)=0.$ Indeed, by \eqref{3.3}we have,
\[
K(e) = c_\alpha \int_{\mathbb Q_p^\times}
\frac{e(y)-1}{|1-y|_p^{\alpha+1}} \,d y. \]
Since $e(y)\equiv 1,$  we have $e(y)-1=0$ for all $y\in\mathbb Q_p^\times$. Therefore, $K(e) =0$.

2) Let $ \pi_1,\ldots,\pi_n\in\widehat{\mathbb Q_p^\times}
$ and let $c_1,\ldots,c_n\in\mathbb C $ satisfy
\[\sum_{j=1}^n c_j=0.\]
Consider
\[S=\sum_{j,k=1}^{n} K(\pi_j\pi_k^{-1}) \,c_j\overline{c_k}. \]
Using the definition of $K$, we obtain
\[
S = c_\alpha \sum_{j,k=1}^{n} c_j\overline{c_k}
\int_{\mathbb Q_p^\times} \frac{\pi_j(y)\overline{\pi_k(y)} -1}{|1-y|_p^{\alpha+1}} \,d y .
\]
We may interchange summation
and integration:
\[S = c_\alpha \int_{\mathbb Q_p^\times}
\frac{\sum_{j,k=1}^{n}c_j\overline{c_k}
\bigl(\pi_j(y)\overline{\pi_k(y)}-1\bigr)
}{|1-y|_p^{\alpha+1}} \,d y .
\]
Now
\[\sum_{j,k=1}^{n} c_j\overline{c_k} = \left|
\sum_{j=1}^{n}c_j \right|^2 = 0,\]
hence
\[\sum_{j,k=1}^{n} c_j\overline{c_k} \bigl(
\pi_j(y)\overline{\pi_k(y)}-1 \bigr) = \sum_{j,k=1}^{n}
c_j\overline{c_k} \pi_j(y)\overline{\pi_k(y)}.
\]
Since
\[\sum_{j,k=1}^{n} c_j\overline{c_k} \pi_j(y)\overline{\pi_k(y)} = \left| \sum_{j=1}^{n} c_j\pi_j(y)
\right|^2, \]
we arrive at
\[S = c_\alpha \int_{\mathbb Q_p^\times} \frac{
\left| \sum_{j=1}^{n} c_j\pi_j(y) \right|^2 }
{|1-y|_p^{\alpha+1}} \,d y .\]
Since $c_\alpha <0, $ and the integrand is nonnegative, it follows that $S\le 0.$ Therefore $K$ is negative definite.
\end{proof}

\begin{definition}
A family $(\mu_t)_{t>0}$ of positive bounded measures on $\mathbb Q_p^n$ with the
properties

\begin{enumerate}
\item[(i)] $\mu_t(G)\le 1, \quad \text{for } t>0,$
\item[(ii)] $ \mu_t * \mu_s = \mu_{t+s}, \quad \text{for } t,s>0,$
\item[(iii)] $ \lim_{t\to 0}\mu_t=\delta_e$ vaguely,
\end{enumerate}
is called a {\it vaguely continuous convolution semigroup} on $\mathbb Q_p^n$. Here $\delta_e$ denotes the Dirac measure at the identity element
$e\in \mathbb Q_p^\times$.
\end{definition}

Let us remind that the family of Radon measures $(\mu_\lambda)_{\lambda\in\Lambda}$
on the locally compact Hausdorff space  $X$ converges {\it vaguely} to a Radon measure $\mu$, if
\[\lim_{\lambda} \int_X f(x)\,d\mu_\lambda(x) =
\int_X f(x)\,d\mu(x)\]
for every function $f\in C_c(X),$
where $C_c(X)$ denotes the space of continuous functions on $ X$
with compact support.

\medskip

\begin{theorem}
Let $K$ denote the symbol of the operator $W^\alpha$ given by \eqref{3.3}.
There exists a unique convolution semigroup $(\mu_t)_{t>0}$ of probability measures on \(\mathbb Q_p^\times\) such that
\[ \widehat{\mu_t}(\pi) = e^{-tK(\pi)},
\qquad t>0, \quad \pi\in\widehat{\mathbb Q_p^\times}.
\]
Equivalently,
\[\widehat{\mu_t}(\pi) = \exp\!\left(
-t\,c_\alpha \int_{\mathbb Q_p^\times}
\frac{\pi(y)-1} {|1-y|_p^{\alpha+1}}
\,dy \right). \]
\end{theorem}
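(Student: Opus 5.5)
The plan is to obtain the semigroup from the general correspondence between continuous negative definite functions on the dual group and convolution semigroups on an LCA group: the Lévy--Khintchine/Schoenberg theorem in the form of \cite{B-F}, Theorem 8.3 and Corollary 8.4. That result states the following. If $G$ is an LCA group and $\psi$ is a continuous negative definite function on $\widehat G$, then there is a unique vaguely continuous convolution semigroup $(\mu_t)_{t>0}$ of subprobability measures on $G$ with $\widehat{\mu_t}=e^{-t\psi}$.

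We take $G=\mathbb Q_p^\times$ and use Pontryagin duality, so $\widehat G=\widehat{\mathbb Q_p^\times}$. We adopt the convention $\widehat{\mu}(\pi)=\int_{\mathbb Q_p^\times}\pi(y)\,d\mu(y)$, which matches the Mellin transform \eqref{2.1}. Uniqueness then follows at once, because the Fourier--Stieltjes transform is injective on bounded measures. The real work is to check the hypotheses on $\psi=K$.

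\textbf{Step 1: negative definiteness.} This is Theorem~\ref{th-neg}. Its proof only uses $c_\alpha<0$ and the absolute convergence from Lemma~\ref{lemma10.1}, so it goes through verbatim for every $\alpha>0$, not only for $0<\alpha<1$; I would remark on this. The Hermitian symmetry $K(\pi^{-1})=\overline{K(\pi)}$ is contained in the first form of Definition~\ref{Def9-1}, and can also be seen directly from \eqref{3.3} since $c_\alpha$ is real.

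\textbf{Step 2: continuity of $K$.} Recall that $\widehat{\mathbb Q_p^\times}\cong\widehat{S_0}\times\mathbb T$, where $\widehat{S_0}$ is discrete (it is the dual of a compact group) and $\mathbb T$ is the circle parametrized by $\beta$. It therefore suffices to show that $\beta\mapsto K(\theta|\cdot|_p^{i\beta})$ is continuous for each fixed $\theta$.
\begin{itemize}
\item For $\rank\theta=k\ge1$, Lemma~\ref{pr1} gives the constant value $p^{\alpha k}$. Note that the rank of $\theta|\cdot|_p^{i\beta}$ is the rank of $\theta$, since $|\cdot|_p^{i\beta}$ is trivial on $S_0$.
\item For $\theta=1$, the explicit formula \eqref{3.6} for $K_0(\beta)$ is continuous, including across $\beta=\pm\pi/\ln p$, where it takes equal values.
\end{itemize}

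\textbf{Step 3: probability measures.} By Step~1 of the proof of Theorem~\ref{th-neg}, $K(e)=0$. Hence $\mu_t(\mathbb Q_p^\times)=\widehat{\mu_t}(e)=e^{-tK(e)}=1$, so the subprobability measures are in fact probability measures.

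\textbf{Step 4: a constructive alternative.} Write $\nu(dy)=-c_\alpha|1-y|_p^{-\alpha-1}dy$. This is a positive measure on $\mathbb Q_p^\times\setminus\{1\}$, and
$$K(\pi)=\int(1-\pi(y))\,\nu(dy).$$
For $\varepsilon>0$, let $\nu_\varepsilon$ be the restriction of $\nu$ to $\{|1-y|_p>\varepsilon\}$; this is a finite measure. The compound Poisson semigroup
$$\mu_t^\varepsilon=e^{-t\nu_\varepsilon(\mathbb Q_p^\times)}\sum_{m\ge0}\frac{t^m\nu_\varepsilon^{*m}}{m!}$$
consists of probability measures with transform $e^{-tK_\varepsilon}$. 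By Lemma~\ref{lemma10.1}, $K_\varepsilon\to K$ pointwise. The Lévy continuity theorem on LCA groups (\cite{B-F}) then yields weak limits $\mu_t$, and the semigroup law $\mu_t*\mu_s=\mu_{t+s}$ passes to the limit via the transforms.

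\textbf{Main obstacle.} I expect the delicate point to be the topological bookkeeping: the continuity of $K$ on the full dual group, and the vague continuity $\mu_t\to\delta_e$. For the latter, observe that $e^{-tK(\pi)}\to1$ pointwise as $t\to0$ with $|e^{-tK}|\le1$, because $\operatorname{Re}K\ge0$ by \eqref{Re-1}. The transforms of $\mu_t$ therefore converge boundedly to the transform of $\delta_e$. Since each $\mu_t$ has mass one, no mass escapes, and we obtain weak, hence vague, convergence. If citing \cite{B-F} directly, this property is already part of the conclusion, so only Steps~1--3 need to be verified explicitly.
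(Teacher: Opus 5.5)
Your proposal is correct and takes essentially the same route as the paper. Like the paper, you apply Berg--Forst's Theorem 8.3 to $K$, taking negative definiteness from Theorem~\ref{th-neg} and then proving continuity of $K$ on $\widehat{\mathbb Q_p^\times}$. Your continuity argument uses discreteness of $\widehat{S_0}$: the dual becomes a disjoint union of circles, and on each circle $K$ is either the constant $p^{\alpha k}$ or $K_0(\beta)$. This is only a repackaging of the paper's argument via openness of $A_k^\perp$ and continuity of the rank function.

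Your extra checks make explicit some points the paper leaves implicit:
\begin{itemize}
\item $K(e)=0$ forces the $\mu_t$ to be probability measures.
\item The Hermitian symmetry $K(\pi^{-1})=\overline{K(\pi)}$ holds.
\item The negative-definiteness proof works for every $\alpha>0$, not only $0<\alpha<1$.
\end{itemize}
The compound Poisson construction in Step~4 is sound but optional.
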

\begin{proof} Remark that from \cite[Theorem 8.3]{B-F} it follows that any continuous negative defined function on the duality group defines convolution semigroup. Given that due to the Theorem~\ref{th-neg} the function $K$ is negative defined, it remains to prove its continuity.

Let us first note that for every $k\ge1$, the annihilator to  $A_k=1+p^k\mathbb Z_p$:
\[A_k^\perp = \{\pi\in\widehat{\mathbb Q_p^\times}:\pi(x)=1,\ \forall x\in A_k\} \]
is open in $\widehat{\mathbb Q_p^\times}$. This follows from statement 2.9 in \cite{B-F} and that $A_k$ is a compact subgroup of the locally compact Abelian group $Q_p^\times$.

This immediately implies that the rank function $R:\widehat{\mathbb Q_p^\times}\longrightarrow\mathbb N_0,$  $R(\pi)=\operatorname{rank}\pi, $ is continuous.
Ideed, by definition,
\[R^{-1}(\{k\}) = A_k^\perp\setminus A_{k-1}^\perp,
\qquad k\ge1,\]
while $R^{-1}(\{0\}) = A_0^\perp.$
Since each annihilator $A_k^\perp$ is open, every set $R^{-1}(\{k\})$ is open too. Therefore $R$ is continuous, because $\mathbb N_0$ is endowed with the discrete topology.

This finally gives that the function
\[
K:\widehat{\mathbb Q_p^\times}\longrightarrow\mathbb R,
\qquad
K(\pi)=p^{\alpha\,\operatorname{rank}(\pi)},
\]
is continuous as superposition of continuous maps, which together with continuity of  $K_0(\beta)$ w.r.t. $\beta$ finishes the proof.
\end{proof}

\begin{corollary}
For $\varphi\in L^2(\mathbb{Q}_p^{\times})$, let
$$(T_t\varphi)(x)=\int\limits_{\mathbb{Q}_p^{\times}}\varphi(xy^{-1})\mu_t(dy),\quad x\in\mathbb{Q}_p^{\times},t>0.$$
Then $(T_t)_{t>0}$ is a contraction $(C_0)$--semigroup and
$$(\mathfrak{M}T_t\varphi))(\pi)=e^{-tK(\pi)}(\mathfrak{M}\varphi)(\pi),\;\pi\in\widehat{\mathbb{Q}_p^{\times}}.$$
Moreover, its infinitesimal generator is given by $A=-W^{\alpha}$.
\end{corollary}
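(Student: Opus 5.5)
The plan is to work entirely on the Mellin side and then identify generators. First, for fixed $t>0$ the operator $T_t\varphi=\varphi\ast\mu_t$ (multiplicative convolution with a probability measure on the group $\mathbb{Q}_p^\times$) is well defined and bounded on $L^2(\mathbb{Q}_p^\times)$ with norm $\le\mu_t(\mathbb{Q}_p^\times)\le1$. This follows from Minkowski's integral inequality together with the invariance of $dx^\times$ under $x\mapsto xy^{-1}$, which is the measure version of \cite[Pr. 2.40]{Fo}. So each $T_t$ is a contraction.

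Second, I will compute the Mellin transform. For $\varphi\in L^1\cap L^2$, Fubini and the substitution $z=xy^{-1}$ (exactly as in the proof of Theorem~\ref{th1}) give
\[
(\mathfrak M T_t\varphi)(\pi)=\int_{\mathbb{Q}_p^\times}\pi(y)\,\mu_t(dy)\,(\mathfrak M\varphi)(\pi)=\widehat{\mu_t}(\pi)(\mathfrak M\varphi)(\pi)=e^{-tK(\pi)}(\mathfrak M\varphi)(\pi).
\]
The convention for $\widehat{\mu_t}$ must match the kernel $\pi(y)$ used in \eqref{2.1}. The identity extends to all of $L^2$ by density of $S^\times$ (Lemma~\ref{lemma2}), the boundedness from the first step, and the isometry of Theorem~\ref{pr-Kr}.

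The semigroup law $T_tT_s=T_{t+s}$ then follows either from $\mu_t\ast\mu_s=\mu_{t+s}$ or directly from $e^{-tK}e^{-sK}=e^{-(t+s)K}$ and injectivity of $\mathfrak M$.

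Third, for strong continuity I will use Plancherel:
\[
\|T_t\varphi-\varphi\|^2=\int|e^{-tK(\pi)}-1|^2|\mathfrak M\varphi(\pi)|^2d\pi .
\]
The integrand tends to $0$ pointwise as $t\to0$, and $|e^{-tK}-1|\le2$ because $\operatorname{Re}K\ge0$ by \eqref{Re-1}. Dominated convergence finishes this step.

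Finally, comparing with Corollary~\ref{RepTt}, both semigroups have the same Mellin multiplier $e^{-tK}$. Injectivity of $\mathfrak M$ then shows they coincide with the semigroup of Theorem~\ref{SemTt}, so the generator is $-W^\alpha$.

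For a direct check of the generator, note that $(e^{-tK}-1)/t\to -K$ pointwise and $|(e^{-tK}-1)/t|\le|K|$ when $\operatorname{Re}K\ge0$. Dominated convergence therefore gives convergence in $L^2(\widehat{\mathbb{Q}_p^\times})$ exactly when $K\mathfrak M\varphi\in L^2$, that is, on $D(W^\alpha)$.

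The main obstacle I expect is the Fubini/density step in the second part. One must justify the interchange for a measure $\mu_t$ rather than a function, and make sure the character-pairing conventions for $\widehat{\mu_t}$ and $\mathfrak M$ agree (a $\pi$ versus $\pi^{-1}$ mismatch would replace $K(\pi)$ by $K(\pi^{-1})=\overline{K(\pi)}$). I would first verify the identity on $S^\times$, where all integrals are absolutely convergent, and then extend by continuity.
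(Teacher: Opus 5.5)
Your proposal is correct and follows essentially the paper's route. The paper takes the contraction convolution semigroup and its Mellin multiplier $e^{-tK}$ from Berg--Forst, then identifies it with the semigroup of Theorem~\ref{SemTt} via Corollary~\ref{RepTt} and injectivity of $\mathfrak M$. You verify contractivity, the multiplier identity, the semigroup law and strong continuity directly before making the same identification. Your direct generator computation using $|e^{-tK}-1|\le t|K|$ (valid since $\operatorname{Re}K\ge0$) also makes the conclusion independent of Theorem~\ref{SemTt}. The Fubini step you flag as the main obstacle is immediate by Tonelli, since $\int\!\int|\varphi(xy^{-1})|\,dx^{\times}\,\mu_t(dy)=\|\varphi\|_{L^1}\,\mu_t(\mathbb{Q}_p^{\times})<\infty$.
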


\begin{proof}
By the Berg--Forst theorem, there exists a unique convolution semigroup of probability
measures $(\mu_t)_{t\ge0}$ on $Q_p^\times$ satisfying
\[
\widehat{\mu_t}(\pi)=e^{-tK(\pi)}, \qquad \pi\in\widehat{Q_p^\times}.
\]
The theorem associates with $(\mu_t)_{t\ge0}$ a contraction convolution semigroup
$(S_t)_{t\ge0}$ whose Mellin representation is given by
\[
\mathfrak M (S_t f)(\pi)=e^{-tK(\pi)}\,\mathfrak M f(\pi),
\qquad f\in L^2(Q_p^\times).
\]

On the other hand, by Theorem~\ref{SemTt}, the operator $(-W^\alpha)$ generates a contraction
$C_0$-semigroup $(T_t)_{t\ge0}$ satisfying
\[
\mathfrak M(T_t f)(\pi)=e^{-tK(\pi)}\,\mathfrak M f(\pi).
\]
Thus $\mathfrak M(S_t f)=\mathfrak M(T_t f).$
Since the Mellin transform is injective, we conclude that
$S_t=T_t.$ Therefore,
\[
(T_t f)(x)
=
\int_{Q_p^\times}f(xy^{-1})\,\mu_t(dy),
\qquad x\in Q_p^\times,\; t\ge0.
\]
\end{proof}

\bigskip
\section*{Acknowledgements}
 The first author acknowledges the funding support in the framework of the project ``Spectral Optimization: From Mathematics to Physics and Advanced Technology'' (SOMPATY) received from the European Union’s Horizon 2020 research and innovation programme under the Marie Skłodowska-Curie grant agreement No 873071. The second and third authors acknowledge the financial support by the National Research Foundation of Ukraine (Project number 2023.03/0002).
 
The authors are grateful to Professor Roman Urban for pointing out an error in an earlier version of this manuscript, which allowed us to fix it and obtain a better estimate.

\end{document}